\newtheorem{lemma}{Lemma}
\newtheorem{proposition}{Proposition}
\newtheorem{claim}{Claim}
\newtheorem{theorem}{Theorem}
\newtheorem{corollary}{Corollary}
\begin{document}
	
	\title{Scaling limit of an equilibrium surface under the Random Average Process} 
	
	
	\author{Luiz Renato Fontes}
	
	\thanks{LRF was partially supported by CNPq grant 307884/2019-8 and FAPESP grant 2017/10555-0}
		
	\author{Mariela Pent\'on Machado}
	
    \thanks{MPM was supported by FAPESP fellowship 2020/02662-4}

	\author{Leonel Zuazn\'abar}
		
	\thanks{LZ was supported by a CAPES/PNPD grant
88882.315481/2013-01 post doctoral fellowship}

	\address{
		\newline
		Luiz Renato Fontes
		\newline
		Universidade de S\~ao Paulo,
		\newline  R. do Mat\~ao, 1010 - Butant\~a, S\~ao Paulo - SP, CEP: 05508-090, Brasil.
		\newline
		e-mail: {\rm \texttt{lrfontes@usp.br}}
		\newline
		\newline
		\newline
		Mariela Pent\'on Machado
		\newline
		Universidade de S\~ao Paulo,
		\newline  R. do Mat\~ao, 1010 - Butant\~a, S\~ao Paulo - SP, CEP: 05508-090, Brasil.
		\newline
		e-mail: {\rm \texttt{marantcha@gmail.com}}
		\newline
		\newline
		Leonel Zuazn\'abar
		\newline
		Universidade de S\~ao Paulo,
		\newline  R. do Mat\~ao, 1010 - Butant\~a, S\~ao Paulo - SP, CEP: 05508-090, Brasil.
		\newline
		e-mail: {\rm \texttt{lzuaznabar@ime.usp.br}}
	}
	
	\begin{abstract}
		We consider the equilibrium surface of the Random Average Process started from an inclined plane, as seen from the height of the origin, obtained in~\cite{FerrariFontes}, where its fluctuations were shown to be of order of the square root of the distance to the origin in one dimension, and the square root of the log of that distance in two dimensions (and constant in higher dimensions). 
		Remarkably, even if not pointed out explicitly in~\cite{FerrariFontes}, the correlation structure of those fluctuations is given in terms of the Green's function of a certain random walk, and thus corresponds to those of Discrete Gaussian Free Fields.
		In the present paper we obtain the scaling limit of those fluctuations in one and two dimensions, in terms of Gaussian processes, in the sense of finite dimensional distributions.
		In one dimension, the limit is given by Brownian Motion; in two dimensions, we get a
		process with a discontinuous covariance function.
	\end{abstract}	

	\maketitle

	\noindent {\small AMS 2020 Subject Classifications: {60K35 ; 82C41}}
	
	\smallskip
	
	\noindent {\small Keywords and Phrases: {Random Average Process; Random Surfaces ; Invariant measure ; Gaussian fluctuations}}

	\section{Introduction}
	
	This paper may be seen as a followup to~\cite{FerrariFontes}, even if after a long span. In the latter paper, the Random Average Process (RAP) was introduced as a dynamical random surface/field, whose heights, indexed by $d$-dimensional (discrete) space, evolve in discrete time by taking 
	averages of neighboring heights\footnote{\cite{FerrariFontes} also considers a continuous time version of the RAP.}. The average weights are random, hence the terminology. The initial condition is important for the behavior of the dynamics, and in~\cite{FerrariFontes} the case of an inclined hyperplane was considered, and, among other results on the time asymptotics of the RAP, a CLT for the height at the origin, as well as the existence 
	of  a limiting surface as seen from the height of the origin, were established.
	
	Since that initial paper, considerable attention has been devoted to that model. We mention~\cite{KG,Sch,ZS1,FMV,ZS2,BRS,CKMM1,CKMM2,DK,GRPIB}.
	
	In the present paper, we consider the above mentioned limiting/invariant surface obtained in~\cite{FerrariFontes}, and obtain the (full) scaling limit of its fluctuations in dimensions one and two (where they are unbounded; they are bounded in higher dimensions). In~\cite{FerrariFontes}, the order of magnitude of those fluctuations were shown to be the square root of the distance to the origin in one dimension, and the square root of the log of that distance in dimension two. The (limiting) shape of the fluctuations was not not addressed, and we seek to complete the picture now.
	
	A remarkable feature of the fluctuations of the limiting surface obtained in~\cite{FerrariFontes} is that its correlation structure is given by 
	the Green's function of a certain random walk (whose jump distribution depends on the weights of the random averages of the RAP). In this
	sense, there is a relation with (Discrete) Gaussian Free Fields (Gaussian fields indexed by ${\mathbb Z}^d$ with the same correlation
	 structure).\footnote{But this was not pointed out explicitly in~\cite{FerrariFontes}.}
	
	It is natural then to ask whether the limiting surface is Gaussian or not. 
	A positive answer would reduce the efforts in this paper to a straightforward computation of scaled covariances. 
	Analysis of a simpler case (in one dimension, and where the above mentioned random walk is simple) suggests that this is not the case in general, and probably never for weights of bounded range (which is the case we address in this paper).
	
	The processes we obtain as scaling limits of the fluctuations of the invariant surface under the RAP (in one and two dimensions, as aforementioned) are Gaussian. In one dimension, it is Brownian motion, and in two dimensions it has a discontinuous covariance function. Our results are in the sense of finite dimensional distributions only, in both cases. In the one dimensional case, it is conceivable that this may be strengthened to convergence in the usual space of continuous trajectories (although preliminary computations indicate quite a laborious effort on an attempt at verifying classic tighness condtions for that).  The two dimensional scaling limit cannot be continuous, so we would seem to be limited in options for going beyond
	our present results. That would of course be also the case with the two dimensional Discrete Gaussian Free Field with the same covariance, and an investigation of a possible connection with the (continuous) two dimensional Gaussian Free Field would suggest itself, even if that sounds to the authors like an uncertain project at the moment.
	
	Before closing this introductory discussion, and moving to the details of the RAP and our results, it is perhaps worth mentioning that scaling limit results were obtained in~\cite{BRS}, for the RAP in one dimension, along a characteristic direction, with a possibly random initial surface.
	We failed to find a connection to our results, even if merely in a broad or conceptual  sense, but that may be due to a lack of depth in our search.

	A final point concerns previous attempts at proving our results. One reason for the long delay since~\cite{FerrariFontes} may be traced to an unsuccessful previous approach, aiming at verifying conditions in the literature for the CLT for processes with stationary increments, such as the object of this paper in the one dimensional case. Common such conditions involve obtaining good estimates on decay of correlations,  but these seem inadequate to deal with our case, which is more amenable to second moment estimation. The only set of conditions we found previously involving moments came from~\cite{Hall&Hyde}, a reference used in~\cite{FerrariFontes} for a Martingale CLT.  It so happens that these conditions all require control over second moments of quantities involving certain conditional expectations, with some liberty over which $\sigma$-algebra to condition over. With such a choice made, one has to compute the limits of two different quantities, which have to come out equal. On and off over the years, time was spent making such (quite laborious and intricate) computations, with different choices of $\sigma$-algebra, and more recently we became convinced that this approach would not work (whatever the choice of $\sigma$-algebra), so, after some further investigation, we came up with the present, more direct approach (involving nonetheless verifying the conditions of a CLT from~\cite{Hall&Hyde}, but a Martingale CLT, as in~\cite{FerrariFontes}). Curiously, in the present approach we also need, at a point, to compute two limits which have to come out equal, and, in this case, they do.
	It is also perhaps remarkable that the present approach goes through a CLT for the process in order to obatin a CLT for the fluctuations of (one of) its invariant measure(s).
	
	
	

	\subsection{Notation}\label{Notation}
	
	Let us denote by $\{u_n(i,i+\cdot), \, n\geq 1, \, i \in \mathbb{Z}^{d}\}$ a collection of i.i.d.  random probability vectors distributed in $[0,1]^{\mathbb{Z}^{d}}$ with  finite range, and by  $\mathcal{F}_n$ the $\sigma$-algebra generated by $\{u_i,\,1\le i\leq n\}$. As in \cite{FerrariFontes}, we also asume that
	\begin{equation*}
	\mathbb{E}[u_1(0,j)] > 0,\,  \text{ for } |j| \le 1.
	\end{equation*}
	By $\{X_n(i), i \in\mathbb{Z}^d, n \ge 0 \}$, we refer to the discrete-time version of the Random Average Process (RAP) defined in (2.3) at \cite{FerrariFontes} as follows
	\begin{equation*}
	X_n(i) = \sum_{j\in\mathbb{Z}^d}u_n(i,j)X_{n - 1}(j), \text{ for } n\ge 1 \text{ and } i\in\mathbb{Z}^d.
	\end{equation*}
	Given $\lambda \in \mathbb{R}^d$,  let us denote by $\widehat{X}_{\infty}$ the weak limit of the RAP seen from the height at the origin with the initial configuration being a hyper-plane, that is
	\begin{equation}\label{limit_process}
		(\widehat{X}_{\infty}(x))_{x\in \mathbb{Z}^d} \overset{d}{=} \lim_{n\to\infty}\left(X_n(x) - X_n(0)\right)_{x\in \mathbb{Z}^d}, \text{ where } X_0(x) = x\lambda^*.
	\end{equation}
	The existence of $\widehat{X}_{\infty}$ is proved in Corollary $5.2$ in \cite{FerrariFontes}. As pointed out  in \cite{FerrariFontes},
	we may consider  the random walk in a random environment $\tilde{Y}^x_k$, with $\tilde{Y}_0^x = x$ and conditional probability transitions
	\begin{equation*}
	\mathbb{P}\left(\tilde{Y}_k^x, = j\big|\tilde{Y}_{k-1}^x = i,\mathcal{F}_n\right) = u_{n - k}(i,j), \, \text{ for } 1\le k\le n,
	\end{equation*}
	such that for every $n\geq1$
	\begin{equation}\label{conditional_representation}
	\big(X_n(x)\big)_{x\in\mathbb{Z}^{d}} \overset{d}{=} \Big(\mathbb{E}\big[X_0(\tilde{Y}_n^x)\big|\mathcal{F}_n\big]\Big)_{x\in\mathbb{Z}^{d}} .
	\end{equation}	
	Through \eqref{conditional_representation} we can get the following representation (see (2.21) and (2.22) in \cite{FerrariFontes}), that will be crucial in our way of dealing with $\widehat{X}_{\infty}$:
	\begin{equation}\label{series_representation}
	\widehat{X}_{\infty}(x)-x\lambda^*\overset{d}{=}\sum^{\infty}_{i=1}\left( W^{x}_i-W^{0}_i\right)\lambda^* , \text{ for } x\in\mathbb{Z}^d,
	\end{equation}
	where
	\begin{equation*}
		W_i^x = \mathbb{E}[\theta_i(\tilde{Y}^x_{i-1})|\mathcal{F}_i], \text{ for } i\ge 1, x \in\mathbb{Z}^d,   
	\end{equation*}
	and 
	\begin{equation*}
		\theta_n(k)=\sum_{j\in\mathbb{Z}^{d}}(j-k)u_n(k,j), \, \text{ for } n\ge 1, k\in\mathbb{Z}^d.
	\end{equation*}	
We assume that
\begin{equation*}
\sigma^2 := \mathbb{V}(\theta_1(0)\lambda^*)\in(0,\infty),
\end{equation*}	
where $\mathbb{V}$  stands for variance.

 \subsection{Main results}\label{Main results}
 We state now the main results of this work. In Theorem \ref{clt} below, we enunciate  a Central Limit Theorem for the process $\{\widehat{X}_{\infty}(x),\,x\in \mathbb{Z}^d\}$ in dimensions $d=1,2$, and in Theorem \ref{Thm_2}, its finite dimension convergence. Both theorems are obtained (directly or through corollaries) from Proposition \ref{proposition_1} and Proposition \ref{proposition_2}, also stated in this section.
 
 \begin{theorem}\label{clt} Let $\widehat{X}_{\infty}$ be the weak limit of the RAP seen from the height at the origin and with the initial configuration being a hyper-plane  as defined in \eqref{limit_process}. There exists a positive constant $c=c(d)$ such that
 	\begin{equation*}
 	\frac{\widehat{X}_{\infty}(x)-x\lambda^*}{\sqrt{\mathcal{P}_x}}\overset{d}{\underset{|x|\rightarrow \infty}{\longrightarrow}}\mathcal{N}(0,c),
 	\end{equation*}
 	where $\mathcal{P}_x=|x|$ for $d=1$, $\mathcal{P}_x=\log|x|$ for $d=2$ and $\mathcal{N}(0,c)$ is a mean zero Gaussian r.v. with variance $c$.
 \end{theorem}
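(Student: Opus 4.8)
The idea is to read the right‑hand side of \eqref{series_representation} as an $L^2$‑martingale and to apply a martingale central limit theorem, with the normalisation $\sqrt{\mathcal P_x}$ playing the role of the array parameter.

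First I would make the martingale structure explicit. For fixed $x$ put $D_i^x:=(W_i^x-W_i^0)\lambda^*$ and let $p_j^x(\cdot):=\mathbb{P}(\tilde Y_j^x=\cdot\mid\mathcal F_j)$, which is $\mathcal F_j$‑measurable. Writing $W_i^x=\sum_{k\in\mathbb{Z}^d}p_{i-1}^x(k)\,\theta_i(k)$ and using that, conditionally on $\mathcal F_{i-1}$, the scalars $\{\theta_i(k)\lambda^*\}_k$ are i.i.d.\ and independent of $\mathcal F_{i-1}$, with mean $\mu:=\mathbb{E}[\theta_1(0)\lambda^*]$ and variance $\sigma^2$, together with $\sum_kp_{i-1}^x(k)=1$, one gets $\mathbb{E}[D_i^x\mid\mathcal F_{i-1}]=0$; hence $(D_i^x,\mathcal F_i)_{i\ge1}$ is a martingale difference sequence and, by independence across $k$,
\begin{equation}\label{eq:plan-condvar}
\mathbb{E}\big[(D_i^x)^2\mid\mathcal F_{i-1}\big]=\sigma^2\,\|p_{i-1}^x-p_{i-1}^0\|_{\ell^2(\mathbb{Z}^d)}^2 .
\end{equation}
Since the weights have finite range and take values in $[0,1]$ we have $|\theta_i(k)\lambda^*|\le C$ for a deterministic constant $C$, whence $|D_i^x|\le2C$; combined with \eqref{eq:plan-condvar} and the finiteness of $\mathbb{V}(\widehat X_\infty(x)-x\lambda^*)$ established in \cite{FerrariFontes}, the partial sums of $(D_i^x)_i$ converge in $L^2$ to a limit equal in law to $\widehat X_\infty(x)-x\lambda^*$, and $\sum_{i\ge1}\mathbb{E}[(D_i^x)^2]=\mathbb{V}(\widehat X_\infty(x)-x\lambda^*)$.

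Next I would fix an arbitrary sequence $x_n$ with $|x_n|\to\infty$ and apply the martingale CLT (in the form given in \cite{Hall&Hyde}) to the array $D_{n,i}:=D_i^{x_n}/\sqrt{\mathcal P_{x_n}}$, $i\ge1$, adapted to the fixed filtration $(\mathcal F_i)_i$ — so the nesting requirement is trivial. The uniform bound $|D_{n,i}|\le2C/\sqrt{\mathcal P_{x_n}}\to0$ makes $\max_i|D_{n,i}|\to0$, gives the conditional Lindeberg condition for free, and keeps $\sum_i\mathbb{E}[D_{n,i}^2]$ bounded; it therefore remains only to prove that for some constant $c=c(d)>0$
\begin{equation}\label{eq:plan-QV}
\frac{1}{\mathcal P_{x_n}}\sum_{i\ge1}\mathbb{E}\big[(D_i^{x_n})^2\mid\mathcal F_{i-1}\big]
=\frac{\sigma^2}{\mathcal P_{x_n}}\sum_{j\ge0}\|p_j^{x_n}-p_j^0\|_{\ell^2}^2
\;\overset{\mathbb{P}}{\underset{n\rightarrow \infty}{\longrightarrow}}\;c .
\end{equation}
Granting \eqref{eq:plan-QV}, the CLT yields $(\widehat X_\infty(x_n)-x_n\lambda^*)/\sqrt{\mathcal P_{x_n}}\Rightarrow\mathcal N(0,c)$, and since $(x_n)$ is arbitrary the theorem follows; in practice one verifies \eqref{eq:plan-QV} and then passes, via the standard equivalence available once $\max_i|D_{n,i}|\to0$, to the unconditional sum $\sum_i(D_i^{x_n})^2/\mathcal P_{x_n}\to c$ — this, presumably, being the pair of limits that must (and do) agree, as mentioned in the introduction.

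The content of the theorem is thus concentrated in \eqref{eq:plan-QV}, and I would split it into two parts, to be isolated as Proposition~\ref{proposition_1} and Proposition~\ref{proposition_2}. The first is the mean: $\mathbb{E}\big[\sum_j\|p_j^x-p_j^0\|_{\ell^2}^2\big]$ is, term by term, a difference of collision probabilities of two conditionally independent environment walks, and summing over $j$ converts it into a difference of Green's functions of the associated ``difference walk'' (which moves as the difference of two i.i.d.\ steps away from the origin, with a defect at the origin); sharpening the estimates of \cite{FerrariFontes} should give $\mathbb{E}\big[\sum_j\|p_j^x-p_j^0\|_{\ell^2}^2\big]\sim(c/\sigma^2)\,\mathcal P_x$ and identify $c$, with $\mathcal P_x=|x|$ for $d=1$ and $\mathcal P_x=\log|x|$ for $d=2$. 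The second is concentration: one must show $\mathbb{V}\big(\sum_j\|p_j^x-p_j^0\|_{\ell^2}^2\big)=o(\mathcal P_x^2)$, for which I would exploit the one‑step recursion obtained by conditioning on $\mathcal F_{j-1}$ (which expresses $\mathbb{E}[\|p_j^x-p_j^0\|_{\ell^2}^2\mid\mathcal F_{j-1}]$ through $\|p_{j-1}^x-p_{j-1}^0\|_{\ell^2}^2$ and a smoothed version of it), together with a second‑moment computation involving up to four coupled walks, to bound and sum the covariances $\mathrm{Cov}(\|p_j^x-p_j^0\|_{\ell^2}^2,\|p_{j'}^x-p_{j'}^0\|_{\ell^2}^2)$. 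I expect this concentration step to be the main obstacle.
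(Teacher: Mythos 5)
Your plan coincides with the paper's proof in all of its main structural elements: the increments $(W_i^x-W_i^0)\lambda^*$ form a martingale difference array adapted to $(\mathcal F_i)$ (via Lemma 2.2 of \cite{FerrariFontes}), the CLT invoked is Corollary 3.1 of \cite{Hall&Hyde}, the finite range of the weights disposes of the maximal/Lindeberg conditions, and the entire weight of the argument falls on the convergence in probability of the normalized sum of conditional variances. Your identity $\mathbb{E}[(D_i^x)^2\mid\mathcal F_{i-1}]=\sigma^2\|p_{i-1}^x-p_{i-1}^0\|_{\ell^2}^2$ is exactly the paper's (5.7)-type computation leading to \eqref{eq_159}, and your proposed split of \eqref{eq:plan-QV} into a first-moment computation (Green's function asymptotics of the difference chain $D$, via the hitting time $\tau$ and the Feller/Spitzer Tauberian estimates) and a concentration estimate (a second moment over four coupled walks, controlled by the uniform bound of Lemma \ref{LosA}) is precisely the content of Proposition \ref{proposition_1} and of the appendix to Proposition \ref{proposition_2}.

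The one genuine gap is that you apply the CLT to the untruncated series, so your concentration step requires $\mathbb{V}\bigl(\sum_{j\ge0}\|p_j^x-p_j^0\|_{\ell^2}^2\bigr)=o(\mathcal P_x^2)$ for the \emph{infinite} sum. The covariance-summing computation you describe (and the paper carries out) bounds this variance, for the sum truncated at $N$, by a constant times $\sum_{j\le N}\mathbb{P}(\tilde D_j^0=0)+\sum_{j\le N}\mathbb{P}(\tilde D_j^x=0)$; since the difference walk is recurrent in $d=1,2$, both sums diverge as $N\to\infty$, so the bound gives nothing for the full series. This is exactly why the paper truncates at $N=A|x|^2$ — where the bound is $O(\sqrt A\,|x|)$ in $d=1$ and $O(\log(A|x|^2))$ in $d=2$, both $o(\mathcal P_x^2)$ — proves the CLT with variance $h(A)$ for the truncated sum (Proposition \ref{proposition_2}), and then kills the tail $\sum_{i>A|x|^2}(W_i^x-W_i^0)\lambda^*$ by an unconditional $L^2$ estimate: orthogonality of the increments \eqref{eq_107} together with the exact variance asymptotics $\mathbb{E}(\widehat X_\infty(x)-x\lambda^*)^2\sim c\,\mathcal P_x$ from (5.14) of \cite{FerrariFontes} (resp. \eqref{eq_140}) show that the tail's normalized second moment converges to $c-h(A)$, which tends to $0$ as $A\to\infty$, and a $3\varepsilon$-argument with the double limit finishes the proof. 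Your plan needs this truncation-plus-tail device (the tail of the conditional-variance sum could alternatively be handled by nonnegativity and Markov, but that still requires the truncated first-moment asymptotics of Proposition \ref{proposition_1} and the total-variance limit); as written, the direct summation of covariances over all $j,j'$ will not close.
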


Propositions \ref{proposition_1} and \ref{proposition_2}, stated next, allow us to split the infinite series in \eqref{series_representation} in two sums, such that, after normalization, one sum converges to a Gaussian law, and the second moment of the other is close to zero in a certain way to be made precise, leading to 
Theorem \ref{clt}.

 \begin{proposition}\label{proposition_1} Let us consider $\mathcal{P}_x$ as defined in Theorem \ref{clt}. The following limits exist
 	\begin{align*}
 	 h(A) &:= \lim_{|x|\to\infty}  \frac{1}{\mathcal{P}_x}\mathbb{E}\Big(\sum^{A|x|^2}_{i=1}(W^x_i-W^0_i)\lambda^*\Big)^2, \, \text{ for any } A \geq 1,\\
 	 c(d) &:= \lim_{A\to\infty} h(A) .
 	\end{align*}

 \end{proposition}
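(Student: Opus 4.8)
The plan is to work directly from the series representation \eqref{series_representation} and compute the expectation of the partial sum $S^x_{A}:=\sum_{i=1}^{A|x|^2}(W^x_i-W^0_i)\lambda^*$ by expanding the square and using the environment structure. Writing $W^x_i=\mathbb{E}[\theta_i(\tilde Y^x_{i-1})\mid\mathcal{F}_i]$, the increments $(W^x_i-W^0_i)\lambda^*$ are, for fixed $i$, measurable with respect to $\mathcal{F}_i$, and one checks (as in \cite{FerrariFontes}) that they form a martingale difference sequence in $i$ relative to $(\mathcal{F}_i)$; hence the cross terms vanish in expectation and $\mathbb{E}(S^x_A)^2=\sum_{i=1}^{A|x|^2}\mathbb{E}\big[(W^x_i-W^0_i)\lambda^*\big]^2$. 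The summand at step $i$ is governed by the two walks $\tilde Y^x$ and $\tilde Y^0$ run for $i-1$ steps in the same environment: conditioning on $\mathcal{F}_{i-1}$ and using that $\theta_i$ has the law of $\theta_1$ and is independent of $\mathcal{F}_{i-1}$, together with the assumption $\sigma^2=\mathbb{V}(\theta_1(0)\lambda^*)\in(0,\infty)$, one gets $\mathbb{E}\big[(W^x_i-W^0_i)\lambda^*\big]^2=\sigma^2\,\mathbb{P}(\tilde Y^x_{i-1}\neq \tilde Y^0_{i-1})$ — i.e. $\sigma^2$ times the probability that two coupled copies of the annealed random walk (a mean-zero, finite-range random walk after averaging the environment) started from $x$ and $0$ have not yet met by time $i-1$. (One must be slightly careful about whether the two walks are run in the same or independent environments; in either case the relevant difference walk is again a mean-zero finite-range walk, possibly with a modified but still nondegenerate covariance, and I would record this reduction as a lemma.)

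So up to the constant $\sigma^2$ the quantity to analyze is $\frac{1}{\mathcal{P}_x}\sum_{i=1}^{A|x|^2}\mathbb{P}(\tau_{x}>i-1)$, where $\tau_x$ is the meeting (equivalently, hitting time of $0$ for the difference walk started at $x$). This is precisely a truncated Green's-function / expected-occupation computation for a mean-zero random walk with finite second moments, and the asymptotics are classical: in $d=1$, $\mathbb{P}(\tau_x>n)$ decays like $c\,|x|/\sqrt{n}$ for $n\gg|x|^2$ and is $\approx 1$ for $n\ll |x|^2$, so $\sum_{i=1}^{A|x|^2}\mathbb{P}(\tau_x>i-1)\sim |x|\cdot g(A)$ for an explicit $g(A)\uparrow c_1<\infty$; dividing by $\mathcal{P}_x=|x|$ gives $h(A)=\sigma^2 g(A)$, and $c(1)=\lim_{A\to\infty}h(A)=\sigma^2 c_1$. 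In $d=2$, the recurrent but log-divergent behavior gives $\sum_{i=1}^{A|x|^2}\mathbb{P}(\tau_x>i-1)\sim c_2\log|x|$ with an $A$-independent constant once $A\ge 1$ (the window $i\le|x|^2$ already accumulates the full $\log|x|$; larger $A$ only adds an $O(1)$ contribution relative to $\log|x|$), so $h(A)$ is already the limit and $c(2)=h(A)$ for all $A\ge1$. The existence of the limits is then a matter of invoking a local CLT / potential-kernel estimate for the difference walk to justify the Riemann-sum convergence in $d=1$ and the logarithmic asymptotics in $d=2$, together with a dominated-convergence argument to pass $A\to\infty$ inside in $d=1$ (monotonicity of $h(A)$ in $A$ makes this painless) — positivity of $c(d)$ follows since $\mathbb{P}(\tau_x>0)=1$ for $x\neq0$ and $\sigma^2>0$.

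The main obstacle is the uniform control of $\mathbb{P}(\tau_x>n)$ across the full range $1\le n\le A|x|^2$ with explicit constants: one needs not just the diffusive scaling but good enough error bounds in the local limit theorem / hitting-time estimates for the (possibly non-simple, finite-range, mean-zero) difference walk so that the sum over $i$ converges to an integral in $d=1$ and to $c_2\log|x|$ in $d=2$ uniformly in $A\ge1$. I would handle $d=1$ via the reflection/ballot-type identity expressing $\mathbb{P}(\tau_x>n)$ through the walk's distribution at time $n$ and then Riemann-sum the resulting expression; for $d=2$ I would use the known asymptotics of the potential kernel $a(x)\sim c\log|x|$ and the identity relating $\sum_{i\le m}\mathbb{P}(\tau_x>i)$ to differences of potential kernel values, plus a diffusive tail bound $\mathbb{P}(\tau_x>m)=O(\log|x|/\log m)$ to kill the tail beyond $|x|^2$. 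A secondary technical point to settle cleanly is the precise form of the coupling of the two walks $\tilde Y^x,\tilde Y^0$ implicit in \eqref{series_representation} and hence the exact identity of the difference walk; this is bookkeeping but must be done before the Green's-function asymptotics can be quoted.
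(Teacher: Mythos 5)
There is a genuine gap at the central identity. Your orthogonality step (the cross terms in $i$ vanish) is correct and is \eqref{eq_107} of the paper, but the formula you then assert for the diagonal term, $\mathbb{E}\big[((W^x_i-W^0_i)\lambda^*)^2\big]=\sigma^2\,\mathbb{P}(\tilde Y^x_{i-1}\neq\tilde Y^0_{i-1})$, is false. Since $W^x_i=\mathbb{E}[\theta_i(\tilde Y^x_{i-1})\mid\mathcal{F}_i]$ averages over the walk given the environment, setting $p_x(z)=\mathbb{P}(\tilde Y^x_{i-1}=z\mid\mathcal{F}_{i-1})$ and using the independence of $\theta_i(\cdot)$ from $\mathcal{F}_{i-1}$ one gets
$\mathbb{E}\big[((W^x_i-W^0_i)\lambda^*)^2\big]=\sigma^2\,\mathbb{E}\sum_z\big(p_x(z)-p_0(z)\big)^2=2\sigma^2\big\{\mathbb{P}_0(D_{i-1}=0)-\mathbb{P}_x(D_{i-1}=0)\big\}$,
where $D$ is the difference of two walks that are \emph{independent given the environment}; this is \eqref{eq_143}--\eqref{eq_104}, i.e.\ (5.7)--(5.8) of \cite{FerrariFontes}. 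This is a difference of return probabilities, not a non-meeting probability: your summand tends to $\sigma^2$ as $i\to\infty$, while the true summand tends to $0$. Consequently the quantity you propose to analyze, $\mathcal{P}_x^{-1}\sum_{i\le A|x|^2}\mathbb{P}(\tau_x>i-1)$, diverges — the terms with $i\lesssim|x|^2$ alone contribute order $|x|^2$, which is inconsistent with your own claim that this sum is $\sim|x|\,g(A)$ (and with your observation that $\mathbb{P}(\tau_x>n)\approx 1$ for $n\ll|x|^2$). This is not the "bookkeeping" issue about the coupling that you flag at the end; it changes the order of magnitude of the object being normalized.

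The missing ingredient is the first-passage decomposition \eqref{eq11}, which converts $\sum_{k\le n}\{\mathbb{P}_0(D_k=0)-\mathbb{P}_x(D_k=0)\}$ into the \emph{convolution} $\sum_{k\le n}\mathbb{P}_0(D_{n-k}=0)\,\mathbb{P}_x(\tau>k)$; the extra factor $\mathbb{P}_0(D_{n-k}=0)\asymp(n-k)^{-d/2}$ is exactly what brings the total down from order $|x|^2$ to order $|x|$ in $d=1$ and to $\log|x|$ in $d=2$. Once that identity is in place, the tools you list (local CLT, invariance principle for $\tau_x/x^2$ in $d=1$, potential-kernel and Ridler-Rowe asymptotics in $d=2$) are essentially those of the paper: in $d=1$ the paper Riemann-sums $c'n^{-1}\sum_k(An^2-k)^{-1/2}\mathbb{P}_x(\tau>k)$ to the explicit integral defining $h(A)$, and in $d=2$ it sandwiches the convolution between the lower bound $\mathbb{P}_x(\tau>A|x|^2)\sum_{k\le A|x|^2}\mathbb{P}_0(D_k=0)$ and the known full variance of $\widehat X_\infty(x)-x\lambda^*$, both of which converge to the same constant. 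A further point you gloss over is that the return probabilities of the environment-correlated chain $D$ differ from those of the annealed chain $H$ by the nontrivial factor $1/\mathfrak{A}$ (Lemma \ref{lemma_4}, via a Tauberian argument), which enters the constants $c(d)$. As written, your reduction analyzes the wrong quantity, so the proof does not go through.
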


 \begin{proposition}\label{proposition_2}Let us consider $\mathcal{P}_x$ 
 	as in Theorem \ref{clt} and $h$ as in Proposition \ref{proposition_1}. Then
 	\begin{equation*}
 	\sum^{A|x|^2}_{i=1}\frac{\left(W^x_i-W^0_i\right)\lambda^*}{\sqrt{\mathcal{P}_x}} \overset{d}{\underset{{|x|\rightarrow \infty}}{\longrightarrow}}\mathcal{N}(0,h(A)) , \, \text{ for any } A \ge 1.
 	\end{equation*}
 \end{proposition}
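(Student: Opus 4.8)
\textbf{Proof proposal for Proposition \ref{proposition_2}.}

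The plan is to apply a martingale central limit theorem, precisely in the form used in \cite{FerrariFontes} (the Martingale CLT from \cite{Hall&Hyde}), to the triangular array of increments built from the summands $(W^x_i - W^0_i)\lambda^*$. The key observation is that, for each fixed $x$, the sequence $M^x_N := \sum_{i=1}^{N}(W^x_i - W^0_i)\lambda^*$ is a martingale with respect to the filtration $\{\mathcal{F}_i\}$: indeed $W^x_i = \mathbb{E}[\theta_i(\tilde Y^x_{i-1}) \mid \mathcal{F}_i]$ is $\mathcal{F}_i$-measurable, and since $u_i$ is independent of $\mathcal{F}_{i-1}$ while $\tilde Y^x_{i-1}$ depends only on $u_{i-1},\dots$ (i.e.\ is $\mathcal{F}_{i-1}$-measurable in the relevant sense used in \cite{FerrariFontes}), one gets $\mathbb{E}[(W^x_i - W^0_i)\lambda^* \mid \mathcal{F}_{i-1}] = 0$. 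Thus, setting $n = \lfloor A|x|^2\rfloor$, the normalized sum in the statement is the terminal value of the martingale array $\xi_{x,i} := (W^x_i - W^0_i)\lambda^*/\sqrt{\mathcal{P}_x}$, $1\le i\le n$, and it suffices to verify the two standard hypotheses: (i) the conditional variances converge, $\sum_{i=1}^{n}\mathbb{E}[\xi_{x,i}^2 \mid \mathcal{F}_{i-1}] \to h(A)$ in probability as $|x|\to\infty$; and (ii) a conditional Lindeberg (or conditional Lyapunov) condition, e.g.\ $\sum_{i=1}^{n}\mathbb{E}[\xi_{x,i}^2 \mathbbm{1}\{|\xi_{x,i}|>\varepsilon\}\mid \mathcal{F}_{i-1}] \to 0$ in probability for every $\varepsilon>0$.

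For step (ii), the Lindeberg condition, I would pass through a Lyapunov-type bound: it is enough to show $\sum_{i=1}^n \mathbb{E}[\xi_{x,i}^4]\to 0$, or even just $\sum_i \mathbb{E}|\xi_{x,i}|^{2+\delta}\to 0$ for some $\delta>0$. Since $\mathcal{P}_x \to \infty$, this reduces to a uniform (in $i\le n$) fourth-moment bound on $(W^x_i - W^0_i)\lambda^*$ together with a count of how many terms contribute non-negligibly. Here one uses that $W^x_i - W^0_i$ is a conditional expectation of $\theta_i(\tilde Y^x_{i-1}) - \theta_i(\tilde Y^0_{i-1})$, which vanishes on the event $\{\tilde Y^x_{i-1} = \tilde Y^0_{i-1}\}$ (coalescence of the two random walks driven by the same environment), so the contributing terms are controlled by the probability that the two walks have not yet coalesced by time $i-1$. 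Standard local-CLT/Green's-function estimates for the coalescing random walks $\tilde Y$ (the same ones underlying \cite{FerrariFontes}) give that this non-coalescence probability decays, and the finite-range assumption plus $\sigma^2<\infty$ gives bounded moments of $\theta$; combining these yields the required smallness. This is essentially the same computation that proves Proposition \ref{proposition_1}, run at the level of fourth moments instead of second moments.

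For step (i), the convergence of conditional variances, the natural route is to show that $V_x := \sum_{i=1}^{n}\mathbb{E}[\xi_{x,i}^2\mid\mathcal{F}_{i-1}]$ has mean converging to $h(A)$ — which is \emph{exactly} the content of the limit defining $h(A)$ in Proposition \ref{proposition_1}, since $\mathbb{E}[V_x] = \frac{1}{\mathcal{P}_x}\mathbb{E}(M^x_n)^2$ by the martingale (orthogonality of increments) property — and then show $\mathrm{Var}(V_x)\to 0$. The latter variance bound is the crux: one must estimate fourth-mixed-moments of the increments, i.e.\ control $\mathbb{E}\big[\mathbb{E}[\xi_{x,i}^2\mid\mathcal{F}_{i-1}]\,\mathbb{E}[\xi_{x,j}^2\mid\mathcal{F}_{j-1}]\big]$ and compare with the product of the means, showing decorrelation for $|i-j|$ large. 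Again coalescence is the mechanism: the conditional second moment $\mathbb{E}[\xi_{x,i}^2\mid\mathcal{F}_{i-1}]$ is small unless the pair of walks started at $x$ and $0$ has not coalesced by time $i-1$, and joint estimates on several such walks (started from $x,0$ at the two times $i,j$) driven by the common environment give the needed covariance decay. I expect \textbf{this variance estimate — the decorrelation of the conditional variances across widely separated time indices — to be the main obstacle}, since it requires simultaneous control of four coupled random walks in the random environment rather than the two that suffice for the first and second moment computations; the finite-range hypothesis and the Green's-function asymptotics for the walk $\tilde Y$ are what make it tractable, and the bookkeeping is analogous to, though heavier than, the estimates behind Proposition \ref{proposition_1}.

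Finally, once (i) and (ii) are in hand, the Martingale CLT from \cite{Hall&Hyde} delivers $M^x_n/\sqrt{\mathcal{P}_x} = \sum_{i=1}^{\lfloor A|x|^2\rfloor}\xi_{x,i} \overset{d}{\longrightarrow} \mathcal{N}(0,h(A))$ as $|x|\to\infty$, which is the assertion. A small technical point to dispatch along the way is measurability/adaptedness of $\tilde Y^x_{i-1}$ with respect to $\mathcal{F}_{i-1}$ in the precise sense needed for the martingale property — this is handled exactly as in \cite{FerrariFontes} via the representation \eqref{conditional_representation}, so I would simply cite it.
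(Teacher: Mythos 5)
Your overall architecture coincides with the paper's: the statement is proved via the martingale CLT of \cite{Hall&Hyde} applied to the array $\xi_{x,i}=(W^x_i-W^0_i)\lambda^*/\sqrt{\mathcal{P}_x}$, the mean of the conditional variance sum is identified with the quantity computed in Proposition \ref{proposition_1}, and the negligibility conditions on individual increments are disposed of using the finite range of $u_n(i,i+\cdot)$. (On that last point, your detour through fourth moments is unnecessary: finite range gives the deterministic bound $|\xi_{x,i}|\le C/\sqrt{\mathcal{P}_x}$, so the Lindeberg indicator is eventually identically zero; the paper simply checks the max-type conditions of Corollary 3.1 of \cite{Hall&Hyde} directly. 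Also, $W^x_i-W^0_i$ is a difference of conditional expectations over the walk, so it does not literally vanish on a coalescence event of two walk realizations; the correct statement is the identity $\mathbb{E}[(W^x_i\lambda^*-\mu)(W^y_i\lambda^*-\mu)]=\sigma^2\mathbb{P}(D_{i-1}=0\mid D_0=x-y)$, which carries the same intuition.)

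The genuine gap is exactly where you flag it: the proof that $V_x:=\sum_{i}\mathbb{E}[\xi_{x,i}^2\mid\mathcal{F}_{i-1}]$ concentrates around its mean. You propose to control $\mathrm{Var}(V_x)$ by showing decorrelation of $\mathbb{E}[\xi_{x,i}^2\mid\mathcal{F}_{i-1}]$ and $\mathbb{E}[\xi_{x,j}^2\mid\mathcal{F}_{j-1}]$ for $|i-j|$ large via joint estimates on four coupled walks, but you do not carry this out, and it is not how the argument actually closes. The paper reduces the problem (via the identity $\mathbb{E}[(W^x_i-W^0_i)^2\mid\mathcal{F}_{i-1}]-\mathbb{E}[(W^x_i-W^0_i)^2]=2\sigma^2\{\mathbb{P}(\tilde D^0_{i-1}=0\mid\mathcal{F}_{i-1})-\mathbb{P}(\tilde D^0_{i-1}=0)\}-2\sigma^2\{\mathbb{P}(\tilde D^x_{i-1}=0\mid\mathcal{F}_{i-1})-\mathbb{P}(\tilde D^x_{i-1}=0)\}$) to the two convergences \eqref{eq19}--\eqref{eq20}, and then computes the variance of those sums by the orthogonal martingale decomposition over the filtration $(\mathcal{F}_j)_j$, i.e.\ as $\sum_j\mathbb{E}\big(\sum_{i\ge j}\{\mathbb{P}(\tilde D^x_i=0\mid\mathcal{F}_j)-\mathbb{P}(\tilde D^x_i=0\mid\mathcal{F}_{j-1})\}\big)^2$ as in \eqref{eq21}, following (4.3)--(4.5) of \cite{FerrariFontes}. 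The essential new ingredient — absent from your sketch and not deducible from the second-moment computations of Proposition \ref{proposition_1} — is Lemma \ref{LosA}: the uniform boundedness in $n,j,l,l'$ (with $|l-l'|\le K$) of $\sum_{i=j}^{n}[\mathbb{P}(D_i=0\mid D_j=l)-\mathbb{P}(D_i=0\mid D_j=l')]$, whose proof requires potential-kernel and hitting-time asymptotics (from \cite{KestenSpitzer} and \cite{rrowe}) and replaces the nearest-neighbour argument of Lemma 4.3 in \cite{FerrariFontes}, which does not apply in the finite-range setting. With that lemma the variance in \eqref{eq21} is $O(\log(A|x|^2))=o(\mathcal{P}_x^2)$ and the conclusion follows; without it, or an equivalent quantitative estimate, your step (i) does not close.
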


In Section \ref{sec_corollary_1} and Section \ref{sec_corollary_2}, we obtain corollaries from Proposition \ref{proposition_1} and Proposition \ref{proposition_2}, respectively. These results lead us to Theorem \ref{Thm_2}  as Proposition 1 do it for Theorem \ref{clt}. For the sake of simplicity, we decide not to include the statement of these corollaries in this section.

\begin{theorem}\label{Thm_2}Let us consider the following rescaled process. 
		\begin{enumerate}
			\item [$(i)$] For $d = 1$ define
			\begin{equation*}
			X_{n}(t):= \frac{\widehat{X}_{\infty}(\lfloor nt\rfloor) - \lfloor nt\rfloor \lambda}{\sqrt{c\mathcal{P}_n}}, \, \text{ for } t\ge 0 \text{ and } n \ge 1.
			\end{equation*}
			Where $\mathcal{P}_n$ and $c$ are taken as in Theorem \ref{clt}. Let $\{B(t), t\ge 0\}$ be a Standard Brownian motion. Then for $0<t_1<\dots<t_k$ we have
			\begin{equation}\label{convergence_thm_2_dim_1}
			\left(X_n(t_1),\dots,X_n(t_k)\right) \overset{d}{\underset{n\rightarrow \infty}{\longrightarrow}} \left(B(t_1),\dots,B(t_k)\right).
			\end{equation}
			\item[$(ii)$] In case $d = 2$, given $z\in \mathbb{Z}^{2}\setminus(0,0)$, let us define 
			
			\begin{equation}\label{eq_scale_dim_2}
			\tilde{x}_n(z):= (\lfloor n^{|z(1)|}\rfloor, \lfloor n^{|z(2)|}\rfloor) \, \text{ for } n\ge 1,  z\in \mathbb{Z}^2,
			 \end{equation}
			 and
			\begin{equation*}
			X_n(z) := \frac{\widehat{X}_{\infty}(\tilde{x}_n(z) - \tilde{x}_n(z)\lambda^*}{\sqrt{c\mathcal{P}_n}}, \, \text{ for } z\in \mathbb{Z}^2  \text{ and } n \ge 1.
			\end{equation*}		
			Then for $z_1,\dots,z_k$ in $\mathbb{Z}^2$ we have 
			\begin{equation}\label{convergence_thm_2_dim_2}
				\left(X_n(z_1),X_n(z_2),\dots,X_n(z_k)\right) \overset{d}{\underset{n\rightarrow \infty}{\longrightarrow}}(Z_1, \dots, Z_k),
			\end{equation}
			where $(Z_1, \dots, Z_k)$ is a Gaussian vector with covariance matrix $ \left(C_{j,l}\right)_{1\le j,l\le k}$, defined as follows
			\begin{equation*}
				C_{j,l} :=\left\lbrace\begin{array}{lc} \max\{|z_j(1)|,|z_j(2)|\}, & \text{ for } j = l,\\
			\frac{1}{2}\min \{\max\{|z_l(1)|,|z_l(2)|\},\max\{|z_j(1)|,|z_j(2)|\}\}, & \text{ for } j \neq  l.
			\end{array}\right.
			\end{equation*}
		\end{enumerate}				
\end{theorem}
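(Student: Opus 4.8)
The plan is to deduce Theorem~\ref{Thm_2} from Propositions~\ref{proposition_1} and~\ref{proposition_2}---more precisely from the corollaries of these propositions obtained in Sections~\ref{sec_corollary_1} and~\ref{sec_corollary_2}---in the same way in which Theorem~\ref{clt} follows from those propositions, the two new ingredients being a passage to joint laws via the Cram\'er--Wold device together with the martingale representation coming from \eqref{series_representation}, and the identification of the limiting covariance matrix, which will come for free from a polarization identity once the relevant second moments are understood.

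\emph{Step 1: covariance identification.} Since the environment $\{u_n(i,i+\cdot)\}$ is spatially homogeneous and the RAP is linear and preserves constants, the field $\{\widehat{X}_\infty(x+z_0)-\widehat{X}_\infty(z_0)\}_{x}$ has, for each fixed $z_0$, the same law as $\{\widehat{X}_\infty(x)\}_{x}$; in particular the increments of $\widehat{X}_\infty$ are stationary and
\[
\mathbb{E}\big(\widehat{X}_\infty(a)-\widehat{X}_\infty(b)-(a-b)\lambda^*\big)^2=\mathbb{E}\big(\widehat{X}_\infty(a-b)-(a-b)\lambda^*\big)^2.
\]
Proposition~\ref{proposition_1} in the limit $A\to\infty$, together with the $L^2$ negligibility of the tail of the series \eqref{series_representation} (a fact already needed to pass from Propositions~\ref{proposition_1}--\ref{proposition_2} to Theorem~\ref{clt}), gives $\mathbb{E}(\widehat{X}_\infty(x)-x\lambda^*)^2\sim c\,\mathcal{P}_x$ as $|x|\to\infty$. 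Using $ab=\tfrac12\big(a^2+b^2-(a-b)^2\big)$ we then obtain, for $d=1$ with $x_j=\lfloor nt_j\rfloor$ and $\mathcal{P}_n=n$,
\[
\frac{\mathbb{E}\big[(\widehat{X}_\infty(x_j)-x_j\lambda^*)(\widehat{X}_\infty(x_l)-x_l\lambda^*)\big]}{c\,\mathcal{P}_n}\ \xrightarrow[n\to\infty]{}\ \tfrac12\big(t_j+t_l-|t_j-t_l|\big)=\min(t_j,t_l)=\mathrm{Cov}(B(t_j),B(t_l)),
\]
and, for $d=2$, since $\log|\tilde{x}_n(z)|\sim\max\{|z(1)|,|z(2)|\}\log n$ and, whenever $(|z_j(1)|,|z_j(2)|)\neq(|z_l(1)|,|z_l(2)|)$, $\log|\tilde{x}_n(z_j)-\tilde{x}_n(z_l)|\sim\max\{|z_j(1)|,|z_j(2)|,|z_l(1)|,|z_l(2)|\}\log n$, the identity $p+q-\max(p,q)=\min(p,q)$ produces exactly the entries $C_{j,l}$ (the excluded case being trivial, as then $\tilde{x}_n(z_j)=\tilde{x}_n(z_l)$ and $X_n(z_j)=X_n(z_l)$). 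This pins down the candidate Gaussian limit in both parts; it remains to establish joint asymptotic normality.

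\emph{Step 2: joint asymptotic normality via a martingale CLT.} By the Cram\'er--Wold device it suffices to show that, for every choice of coefficients, $\sum_j a_j X_n(t_j)$ (resp.\ $\sum_j a_j X_n(z_j)$) converges to $\mathcal{N}\big(0,\sum_{j,l}a_ja_l C_{j,l}\big)$. By \eqref{series_representation} this linear combination equals $(c\,\mathcal{P}_n)^{-1/2}\sum_{i\ge1}D_i^{(n)}$ with $D_i^{(n)}=\big[\sum_j a_j(W_i^{x_j}-W_i^{0})\big]\lambda^*$, and, as in~\cite{FerrariFontes}, $\{(W_i^{x}-W_i^{0})\lambda^*\}_i$ is a martingale difference sequence for $\{\mathcal{F}_i\}$ (because $\mathbb{E}[W_i^{x}\mid\mathcal{F}_{i-1}]=\mathbb{E}[\theta_1(0)]$ does not depend on $x$, by spatial homogeneity), hence so is $\{D_i^{(n)}\}_i$. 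Exactly as in the derivation of Theorem~\ref{clt}, I would split the sum at an index $A|\xi_n|^2$, where $|\xi_n|$ is the largest spatial scale present ($|\xi_n|\asymp n$ for $d=1$, $|\xi_n|\asymp n^{\max_j\max\{|z_j(1)|,|z_j(2)|\}}$ for $d=2$): by orthogonality of martingale differences the normalized tail $\sum_{i>A|\xi_n|^2}D_i^{(n)}$ has second moment bounded, via Cauchy--Schwarz on the off-diagonal terms, by the diagonal tail quantities controlled in the corollary of Proposition~\ref{proposition_1}, so it tends to $0$ in $L^2$ as $n\to\infty$ and then $A\to\infty$; meanwhile $(c\,\mathcal{P}_n)^{-1/2}\sum_{i\le A|\xi_n|^2}D_i^{(n)}$ converges to a Gaussian law by the corollary of Proposition~\ref{proposition_2}, with variance converging, as $A\to\infty$, to $\sum_{j,l}a_ja_l C_{j,l}$ by Step~1. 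A standard approximation argument (convergence of characteristic functions) then yields the desired limit, and Cram\'er--Wold gives \eqref{convergence_thm_2_dim_1} and \eqref{convergence_thm_2_dim_2}.

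\emph{Main obstacle.} The genuinely technical content is not this assembly but the corollaries of Propositions~\ref{proposition_1} and~\ref{proposition_2} themselves: verifying the conditional-variance and conditional-Lindeberg hypotheses of the martingale central limit theorem of~\cite{Hall&Hyde} for the array $\{D_i^{(n)}\}$ when the points involved live on widely separated scales, and making the $L^2$ tail estimates uniform over all of them. As in the single-point case this reduces to second-moment estimates for sums of products $W_i^{x}W_i^{y}$ governed by the Green's function of the auxiliary random walk; the simplification is that polarization forces the off-diagonal structure of the covariance, so beyond the already analyzed diagonal quantities at each scale one only needs the bookkeeping showing that cross-scale interactions are of lower order than $\mathcal{P}_n$.
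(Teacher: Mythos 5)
Your proposal is correct and follows essentially the same route as the paper: Cram\'er--Wold, the martingale/orthogonality structure of the $W_i$'s, truncation of the series at the largest spatial scale squared, the corollaries of Propositions~\ref{proposition_1} and~\ref{proposition_2} for the truncated sums, matching of second moments to kill the tail in $L^2$, and the characteristic-function approximation argument from the proof of Theorem~\ref{clt}. The only cosmetic difference is that you identify the covariance by polarization and stationarity of increments directly on the limit field $\widehat{X}_\infty$, whereas the paper performs the equivalent computation on the $W_i$'s via the Green's-function identity \eqref{eq_160}; the resulting formulas (including the treatment of the scales $|\tilde{x}_n(z_j)-\tilde{x}_n(z_l)|$ in $d=2$) coincide with the paper's.
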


\bigskip
	
To conclude this introduction, we establish that the structure of the article is the following: In Section \ref{Section2}, we prove Proposition \ref{proposition_1}. In Section \ref{sec_corollary_1}, we state and prove a corollary of Propositoin \ref{proposition_1}: Corollary \ref{corollary_1}. In Section \ref{Section3}, we prove Proposition 2. In Section \ref{sec_corollary_2},  we ennunciate and prove a corollary of Proposition \ref{proposition_2}: Corollary \ref{corollary_2}. From Proposition \ref{proposition_1} and Proposition \ref{proposition_2}, in Section \ref{sec_proof_Thm_1}, we obtain Theorem \ref{clt}. From Corollary \ref{corollary_1} and Corollary \ref{corollary_2}, in Section \ref{sec_proof_of_Thm_2}, we obtain Theorem \ref{Thm_2}. Finally, in Appendix \ref{appendix_1} we present some technical calculations needed for the proof of Proposition \ref{proposition_2}. These calculations are inspired by and are very similar to ones that appear in the proof of Theorem $4.1$ in \cite{FerrariFontes}. We include them for the sake of completeness.	
	
\section{Proof of Proposition \ref{proposition_1}}\label{Section2}
	 We split  the proof of Proposition \ref{proposition_1} into two cases, $d=1$ and $d=2$. Each case will be dealt with in separate subsections, \ref{dim_1} and \ref{dim_2}, respectively. Later in Section \ref{sec_corollary_1}, we prove a corollary of Proposition \ref{proposition_1} (Corollary \ref{corollary_1}) that is used to obtain Theorem \ref{Thm_2} in the same way that Proposition \ref{proposition_1} is used to get Theorem \ref{clt} .
	 
	 	As in \cite{FerrariFontes} let  us denote by $D = \{D_n, n\ge 0\}$ and $H = \{H_n, n\ge 0\}$ two Markov chains in $\mathbb{Z}^d$ with the following transition probabilities
	 	\begin{equation*}
	 	\mathbb{P}\left(D_{n+1} = k| D_n = l\right) = \sum_{j\in\mathbb{Z}^d}\mathbb{E}\left[u_1(0,j)u_1(l,j + l)\right]\text{ and }\mathbb{P}\left(H_{n+1} = k| H_n = l\right) = \sum_{j\in\mathbb{Z}^d}\mathbb{E}\left[u_1(0,j)\right]\mathbb{E}\left[u_1(l,j + l)\right].
	 	\end{equation*} 
	 	Also let us consider the following stopping time
	
	\begin{equation}\label{tau}
		\tau=\inf\{k\geq 0:\, D_k=0\}.
	\end{equation}
	It follows from a standard argument using the Markov property (see (5.9) in \cite{FerrariFontes} ) that  for $x\in\mathbb{Z}^d$ we get
	\begin{equation}\label{eq11}
		\sum^n_{k=0}\left\{\mathbb{P}_0(D_k=0)-\mathbb{P}_x(D_k=0) \right\}=\sum^n_{k=0}\mathbb{P}_0(D_{n-k}=0)\mathbb{P}_{x}(\tau>k).
	\end{equation}
The following result is also used to prove Proposition \ref{proposition_1}. 

	\begin{lemma}\label{lemma_4} Let us consider 
		\begin{equation*}
			a(x) = \lim_{n\to\infty}\sum_{k = 0}^{n}\left\{\mathbb{P}_0\left(H_k = 0\right) - \mathbb{P}_x(H_k = 0)\right\} \text{ for } x\in\mathbb{Z}^d,
		\end{equation*} 	
		the potential kernel of the Markov chain $H$, and set ${\mathfrak A}=\mathbb{E}_0\big[a(D_1)\big]$. 
		Consider also the following quadratic form:
		\begin{equation}\label{Q}
			Q(\theta)=\mathbb{E}_0\left[\left(H_1\cdot\theta\right)^2\right] = \theta\cdot Q \cdot \theta^t, \, \theta\in\mathbb{Z}^d.
		\end{equation}
		When $d = 1$, $Q = \mathbb{E}_0\left[H_1^2\right]=:\sigma_H^2$, and when $d = 2$, $Q$ is the convariance matrix of $H_1$ with the chain starting from the origin. 
		Then, in the case $d = 1$ we have that 
		\begin{equation}\label{eq12}
			\mathbb{P}_0(D_n=0)\sim \frac{1}
			{\mathfrak A}
			\frac{1}{\sqrt{2\pi}\,\sigma_H }\frac1{\sqrt{n}} \,  \text{ as } n\to\infty,
		\end{equation}
		and when $d = 2$
		\begin{equation}\label{eq11**}
		\sum^n_{k=0}\mathbb{P}_0(D_k=0)\sim \frac{1}
    	{\mathfrak A}	
		\frac{1}{2\pi\sqrt{det(Q)}}\ln (n) \, \text{ as } n\to\infty.
		\end{equation}
	\end{lemma}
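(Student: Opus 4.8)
The plan is to deduce both asymptotics from classical facts about the genuine random walk $H$, by regarding $D$ as a rank-one perturbation of $H$ at the origin and passing to generating functions. The starting observation is structural: the chains $D$ and $H$ have identical one-step transitions out of every site $x\neq 0$, because for $x\ne 0$ the probability vectors $u_1(0,\cdot)$ and $u_1(x,\cdot)$ are independent, so $\mathbb{E}[u_1(0,j)\,u_1(x,j+k)]=\mathbb{E}[u_1(0,j)]\,\mathbb{E}[u_1(x,j+k)]$. Writing $\rho(y):=\mathbb{P}_0(D_1=y)-\mathbb{P}_0(H_1=y)$ — a finitely supported signed measure with $\sum_y\rho(y)=0$ and also $\sum_y y\,\rho(y)=0$ (both $D_1$ and $H_1$ have zero mean under $\mathbb{P}_0$: for $H_1$ by symmetry, for $D_1$ by a one-line computation using $\sum_j j\,u_1(0,j)=\theta_1(0)$) — the transition operators obey $P_D=P_H+R$, $R(x,y)=\mathbf 1\{x=0\}\,\rho(y)$, a rank-one perturbation. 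The walk $H$ is symmetric, centered, finite-range (hence of finite covariance $Q$) and aperiodic, since $\mathbb{E}[u_1(0,j)]>0$ for $|j|\le 1$ makes its increment charge $0$ and the nearest neighbours; thus $H$ is recurrent in $d=1,2$, and its potential kernel $a$ (the one in the statement; note $\mathbb{P}_x(H_k=0)=\mathbb{P}_0(H_k=x)$) is well defined with $a(0)=0$ and $\mathbb{E}_0[a(H_1)]=\sum_y p^H(0,y)a(y)=1$ (the identity $P_Ha-a=\delta_0$ of Spitzer's potential theory). Finally $\mathfrak A=\mathbb{E}_0[a(D_1)]\in(0,\infty)$ — finite because $a$ is bounded on the finite support of $D_1$, positive because $a(y)>0$ for $y\ne 0$ and $\mathbb P_0(D_1\ne 0)>0$.

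Next I would prove the generating-function identity. For $|s|<1$ set $G_D(s)=\sum_{n}\mathbb P_0(D_n=0)s^n$, $G_H(s)=\sum_{n}\mathbb P_0(H_n=0)s^n$, and $K(s)=\sum_{y}\rho(y)\sum_{n}\mathbb P_y(H_n=0)s^n$. The Sherman--Morrison formula for the rank-one perturbation, applied to $(I-sP_D)^{-1}=(I-sP_H-sR)^{-1}$ and read off at the $(0,0)$ entry, gives
\begin{equation*}
G_D(s)=\frac{G_H(s)}{1-sK(s)}.
\end{equation*}
To compute $\lim_{s\uparrow 1}K(s)$, write $\sum_n\mathbb P_y(H_n=0)s^n=G_H(s)-\sum_n\big(\mathbb P_0(H_n=0)-\mathbb P_y(H_n=0)\big)s^n$; the coefficients of the last series are summable with sum $a(y)$, so Abel's theorem gives $\sum_n\mathbb P_y(H_n=0)s^n=G_H(s)-a(y)+o(1)$ as $s\uparrow 1$. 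Summing against $\rho$, and using $\sum_y\rho(y)=0$ and $\sum_y\rho(y)a(y)=\mathbb E_0[a(D_1)]-\mathbb E_0[a(H_1)]=\mathfrak A-1$, we get $K(s)\to 1-\mathfrak A$, hence $1-sK(s)\to\mathfrak A\ne 0$, and therefore $G_D(s)\sim \mathfrak A^{-1}G_H(s)$ as $s\uparrow 1$.

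It then suffices to feed in the known asymptotics of $H$. The local CLT for $H$ gives $\mathbb P_0(H_n=0)\sim(2\pi\sigma_H^2 n)^{-1/2}$ when $d=1$ and $\mathbb P_0(H_n=0)\sim(2\pi n\sqrt{\det Q})^{-1}$ when $d=2$; equivalently $G_H(s)\sim\big(\sigma_H\sqrt{2(1-s)}\,\big)^{-1}$ and, via $\sum_{k\le n}\mathbb P_0(H_k=0)\sim(2\pi\sqrt{\det Q})^{-1}\log n$ and Karamata's Tauberian theorem, $G_H(s)\sim(2\pi\sqrt{\det Q})^{-1}\log\frac1{1-s}$. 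In $d=2$ this closes the argument immediately: $G_D(s)\sim(2\pi\mathfrak A\sqrt{\det Q})^{-1}\log\frac1{1-s}$, and since the partial sums $\sum_{k\le n}\mathbb P_0(D_k=0)$ are nondecreasing, Karamata's Tauberian theorem yields \eqref{eq11**}.

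The case $d=1$ is where the work is: \eqref{eq12} is a statement about the coefficients $\mathbb P_0(D_n=0)$ themselves, and passing to it from $G_D(s)\sim(\mathfrak A\sigma_H\sqrt2)^{-1}(1-s)^{-1/2}$ needs a regularity input, not just a Tauberian theorem — this is the main obstacle. I would obtain it by singularity analysis. The Fourier representations $G_H(s)=\frac1{2\pi}\int_{-\pi}^{\pi}(1-\widehat p_H(\theta)s)^{-1}\,d\theta$ and $K(s)=\frac1{2\pi}\int_{-\pi}^{\pi}\widehat\rho(-\theta)(1-\widehat p_H(\theta)s)^{-1}\,d\theta$ — with $\widehat p_H(\theta)=\mathbb E_0[e^{i\theta H_1}]$, $\widehat\rho(\theta)=\sum_y\rho(y)e^{i\theta y}$, the second integrand being integrable at $\theta=0$ precisely because $\widehat\rho(\theta)=O(\theta^2)$ thanks to the two vanishing moments of $\rho$ — continue analytically to a Flajolet--Odlyzko $\Delta$-domain; aperiodicity of $H$ rules out any singularity on the unit circle except $s=1$, where $G_H$ has a pure square-root singularity and $1-sK(s)$ remains analytic and nonvanishing. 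Hence $G_D=G_H/(1-sK)$ has a square-root singularity at $s=1$ in a $\Delta$-domain, and the transfer theorem gives $\mathbb P_0(D_n=0)\sim(\mathfrak A\sigma_H\sqrt2)^{-1}n^{-1/2}/\Gamma(\tfrac12)=(\mathfrak A\sqrt{2\pi}\,\sigma_H)^{-1}n^{-1/2}$, which is \eqref{eq12}. (Alternatively one could route the $d=1$ endgame through a strong renewal theorem for the renewal process of returns of $D$ to $0$, whose inter-arrival tail works out to $\sim\mathfrak A\sigma_H\sqrt{2/\pi}\,n^{-1/2}$; but the boundary exponent $1/2$ forces an extra condition, so singularity analysis looks cleaner.)
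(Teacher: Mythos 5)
Your argument is correct in substance but differs from the paper's proof at two points, one of which matters. For the middle step, the paper simply quotes Lemma 3.2 of \cite{FerrariFontes} for the ratio limit $\lim_{s\to1}f(s)/g(s)=1/{\mathfrak A}$ (with a footnote correcting minor errors in that reference), whereas you rederive it from scratch via the rank-one perturbation $P_D=P_H+\delta_0\otimes\rho$, the identity $G_D=G_H/(1-sK)$, and the potential-kernel relation $\mathbb{E}_0[a(H_1)]=1$; this is a clean, self-contained replacement (equivalent to the first-return factorization $1-sK(s)=(1-F_D(s))/(1-F_H(s))$), and it makes transparent why ${\mathfrak A}$ appears. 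The $d=2$ endgame is identical to the paper's (Karamata, i.e.\ Feller's Theorem 5, applied to the partial sums). The real divergence is the $d=1$ endgame: you correctly identify that passing from $G_D(s)\sim c(1-s)^{-1/2}$ to $\mathbb{P}_0(D_n=0)\sim c'n^{-1/2}$ needs a regularity input beyond a plain Tauberian theorem, but the paper's input is much lighter than yours: Lemma 3.1 of \cite{FerrariFontes} shows $\mathbb{P}_0(D_n=0)$ is non-increasing in $n$ (it equals $\mathbb{E}\sum_l\mathbb{P}(\tilde Y_n=l\,|\,\mathcal{F}_n)\mathbb{P}(\hat Y_n=l\,|\,\mathcal{F}_n)$, an $\ell^2$-norm of a conditional law, which cannot increase under convolution), and then the second (monotone-density) part of Feller's Theorem 5 gives \eqref{eq12} directly. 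Your Flajolet--Odlyzko route can be made to work, but it obliges you to establish uniform $\Delta$-domain asymptotics for $G_H$, analyticity of $K$ up to $s=1$, and nonvanishing of $1-sK(s)$ on the rest of the closed unit disc (which does follow, e.g.\ from $1-sK=(1-F_D)/(1-F_H)$ and the fact that $F_D$ charges $1$), i.e.\ considerably more complex-analytic overhead for the same conclusion. If you add the monotonicity observation, your proof collapses to essentially the paper's, with the bonus of an independent proof of the ${\mathfrak A}$-ratio.
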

	
	\begin{proof}
		By P7.9 (pag. 75) in \cite{Spitzer} we have  that
		\begin{equation}\label{eq10}
		\lim_{n\rightarrow \infty}(2\pi n)^{d/2}\mathbb{P}_0(H_n=0)=\frac{1}{\sqrt{det(Q)}}.
		\end{equation}
		This implies that
		\begin{equation}\label{losH}
		\sum^{n}_{k=0}\mathbb{P}_0(H_k=0)\sim\left\lbrace\begin{array}{lc}\frac{2}{\sqrt{2\pi}\,\sigma_H }\sqrt{n} ,& \text{ when }d=1,\\
		\frac{1}{2\pi\sqrt{det(Q)}}\ln n, &\text{ when } d=2.
		\end{array}\right.
		\end{equation}
		Let 
			\begin{equation*}
			f(s) = \sum_{n\ge 0}\mathbb{P}\left(D_n = 0|D_0 = 0\right)s^n \text{ and } g(s) = \sum_{n\ge 0}\mathbb{P}\left(H_n = 0|H_0 = 0\right)s^n
			\end{equation*}
			be the power series of $\mathbb{P}(D_n = 0|D_0 = 0)$ and $\mathbb{P}(H_n = 0|H_0 = 0)$, respectively. By Theorem $5$ in \cite{Feller} (page 447) and \eqref{losH},  we obtain
		\begin{equation}
		g(s)\sim\left\lbrace\begin{array}{lc}\frac{1}{\sqrt{2}\,\sigma_H} \frac{1}{\sqrt{1 -s}}, & \text{ when }d=1,\\
		\frac{1}{2\pi\sqrt{det(Q)}}\ln \left(\frac{1}{1-s}\right), & \text{ when }d=2
		\end{array}\right.
		\text{ as } s\to 1^-.
		\end{equation}
		In Lemma 3.2 of \cite{FerrariFontes} is proved that 
		\begin{equation}\label{eq_206}
		\lim_{s\rightarrow 1}\frac{f(s)}{g(s)}=\frac{1}
		{\mathfrak A}. \footnotemark
		\end{equation}
	\footnotetext{\eqref{eq_206} corresponds to Equation 3.14 in~\cite{FerrariFontes}, except that there are (minor)  mistakes in the argument in~\cite{FerrariFontes} leading to
	the latter equation, causing the appearance of the extraneous factor of $1-\gamma'$ in (3.14) of ~\cite{FerrariFontes}, which should not be there, and also causing the expression $(1-\gamma)\sum_xa(x)p_x$, which we presently write as ${\mathfrak A}={\mathbb{E}_0(a(D_1))}$, to appear in the numerator (so to say), rather than in the denominator.}
		Therefore, 
		\begin{equation}
		f(s)\sim\left\lbrace
		\begin{array}{lc}\frac{1}
		{\mathfrak A}
			\frac{1}{\sqrt{2}\,\sigma_H}\frac{1}{\sqrt{1 - s}}, & \text{ when }d=1,\\
		\frac{1}
		{\mathfrak A}
		\frac{1}{2\pi\sqrt{det(Q)}}\ln \left(\frac{1}{1-s}\right), & \text{ when }d=2
		\end{array}\right.
		 \text{ as } s\to 1^-. 
		\end{equation}
		Again by Theorem 5 in \cite{Feller} we get \eqref{eq11**}. In Lemma 3.1 of \cite{FerrariFontes} is proved that $\mathbb{P}_0(D_n=0)$ is non-increasing in $n$. Therefore, we can use the second part of  Theorem $5$ in \cite{Feller} for the case $d=1$, and we obtain \eqref{eq12}.
	\end{proof}	
	One last comment before the proof Proposition \ref{proposition_1} is that  by equation is (5.8) in \cite{FerrariFontes} we have that
	\begin{equation}\label{eq_104}
		\mathbb{E}\Big(\sum_{i = 1}^{A|x|^2}\left(W_i^x - W_i^0\right)\lambda^*\Big)^2 = 2\sigma^2\sum_{i = 1}^{A|x|^2}\left\{\mathbb{P}(D_i=0|D_0=0)-\mathbb{P}(D_i=0|D_0=x)\right\}.
	\end{equation}
	Hence, to get Proposition \ref{proposition_1}, it is enough to show that
	\begin{enumerate}
		\item[(i)] when $d = 1$,
		\begin{equation}\label{eq8*}
		\frac{2\sigma^2}{|x|}\sum^{Ax^2}_{k=1}\left\{\mathbb{P}(D_k=0|D_0=0)-\mathbb{P}(D_k=0|D_0=x)\right\}\underset{|n|\rightarrow\infty}{\longrightarrow} h(A)\underset{A\rightarrow\infty}{\longrightarrow} c,
	\end{equation}
		\item[(ii)] when $d = 2$,
		\begin{equation}\label{eq8**}
		\frac{2\sigma^2}{\log|x|}\sum^{A|x|^2}_{k=1}\left\{\mathbb{P}(D_k=0|D_0=0)-\mathbb{P}(D_k=0|D_0=x)\right\}\underset{|x|\rightarrow\infty}{\longrightarrow}c, \text{ for all } A\ge 1.
		\end{equation}
	\end{enumerate}
	
	\subsection{Proof of Proposition \ref{proposition_1} when $\mathbf{d=1}$}\label{dim_1}
	
	Since the chains $D$ and $H$  are symmetric (see Lemma 2.5 in \cite{FerrariFontes}), we may consider~\eqref{eq8*} with $x=n>0$.
   By \eqref{eq11} and Lemma \ref{lemma_4}, the left hand side of \eqref{eq8*} becomes 
	\begin{equation}\label{eq8}
	\frac{c'}{n}\sum^{An^2}_{k=1}\frac{1}{\sqrt{An^2-k}}\mathbb{P}_n(\tau>k),\,\text{ where } 
	c'=\frac{2\sigma^2}{{\mathfrak A}\sqrt{2\pi}\sigma_H }. 
	\end{equation}
	The left hand side  of \eqref{eq8} then equals
	\begin{equation}\label{eq15}
	c'\sum^{An^2}_{k=1}\frac{1}{\sqrt{A-\frac{k}{n^2}}}\mathbb{P}_0\left(\frac{\tau_n}{n^2}>\frac{k}{n^2}\right)\frac{1}{n^2},
	\end{equation} 
	where $\tau_n$ is the hitting time of $n$ by $H$.
	It follows from well know facts 
	\begin{equation}\label{eq22}
	\mathbb{P}_0\left(\frac{\tau_n}{n^2}>t\right)\underset{n\rightarrow\infty}{\longrightarrow}\mathbb{P}(T_{\frac{1}{\sigma_H}}>t),
	\end{equation}
	uniformly on compact intervals, where the random variable $T_a$ is the passage time of $a$ by a standard one-dimensional Brownian motion,
	$a\in\mathbb R$.

	By  \eqref{eq22}, the expresion in \eqref{eq15}  is asymptotically equivalent to 
	\begin{equation}
	c'\sum^{An^2}_{k=1}\frac{1}{\sqrt{A-\frac{k}{n^2}}}\mathbb{P}\left(T_{1/\sigma_H}>\frac{k}{n^2}\right)\frac{1}{n^2}.
	\end{equation}
	We now observe that
	\begin{equation}\label{eq_100}
		\lim_{n\to\infty}c'\sum^{An^2}_{k=1}\frac{1}{\sqrt{A-\frac{k}{n^2}}}\mathbb{P}\left(T_{1/\sigma_H}>\frac{k}{n^2}\right)\frac{1}{n^2} = c'\int^{A}_{0}\frac{1}{\sqrt{A-y}}\mathbb{P}(T_{1/\sigma_H}>y)dy=c'\int^{1}_{0}\frac{\sqrt{A}\mathbb{P}(T_{1/\sigma_H}>Ay)}{\sqrt{1-y}}dy,
	\end{equation}
which proves the first part of Proposition \ref{proposition_1}, namely
\begin{equation}\label{eq_123}
 \lim_{|x|\to\infty} \frac{1}{\mathcal{P}_x}\mathbb{E}\left(\sum^{A|x|^2}_{i=1}(W^x_i-W^0_i)\lambda^*\right)^2= c'\int^{1}_{0}\frac{\sqrt{A}\mathbb{P}(T_{1/\sigma_H}>Ay)}{\sqrt{1-y}}dy=:h(A).
\end{equation}
 Hence, to conclude the proof, we verify the following claim. 
	\begin{claim}\label{claim_2} 
		\begin{equation}
			\lim_{A\to\infty}\int^{1}_{0}\frac{\sqrt{A}\,\mathbb{P}(T_{1/\sigma_H}>Ax)}{\sqrt{1-x}}dx = \frac{\sqrt{2\pi}}{\sigma_H}.
		\end{equation}
	\end{claim}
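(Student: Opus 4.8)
The plan is to evaluate the limit by dominated convergence, using the explicit form of the Brownian passage time. Write $\Phi$ for the standard normal distribution function. By the reflection principle, for $a>0$ and every $t>0$,
\[
\mathbb{P}(T_a>t)=2\Phi(a/\sqrt t)-1=\sqrt{\tfrac{2}{\pi}}\int_0^{a/\sqrt t}e^{-s^2/2}\,ds .
\]
Two consequences read off immediately from this representation. First, since $e^{-s^2/2}\le 1$, one has the crude bound $\mathbb{P}(T_a>t)\le \sqrt{2/\pi}\,a/\sqrt t$ for all $t>0$. Second, as $t\to\infty$ the interval of integration shrinks to a neighbourhood of $0$, where $e^{-s^2/2}\to1$, so $\mathbb{P}(T_a>t)\sim \sqrt{2/\pi}\,a/\sqrt t$.

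Next I would specialize to $a=1/\sigma_H$ and $t=Ax$ with $x\in(0,1)$. The bound yields, uniformly in $A\ge1$,
\[
\sqrt A\,\mathbb{P}(T_{1/\sigma_H}>Ax)\le \sqrt{\tfrac{2}{\pi}}\,\frac{1}{\sigma_H}\,\frac{1}{\sqrt x},
\]
so the integrand in the claim is dominated, uniformly in $A\ge1$, by $\sqrt{2/\pi}\,\sigma_H^{-1}\,x^{-1/2}(1-x)^{-1/2}$, an integrable function on $(0,1)$. The asymptotics yield, for each fixed $x\in(0,1)$, the pointwise limit $\sqrt A\,\mathbb{P}(T_{1/\sigma_H}>Ax)\to \sqrt{2/\pi}\,\sigma_H^{-1}x^{-1/2}$ as $A\to\infty$. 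By dominated convergence,
\[
\lim_{A\to\infty}\int_0^1\frac{\sqrt A\,\mathbb{P}(T_{1/\sigma_H}>Ax)}{\sqrt{1-x}}\,dx=\sqrt{\tfrac{2}{\pi}}\,\frac{1}{\sigma_H}\int_0^1\frac{dx}{\sqrt{x(1-x)}} .
\]

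Finally I would compute the remaining integral, $\int_0^1 x^{-1/2}(1-x)^{-1/2}\,dx=B(\tfrac12,\tfrac12)=\Gamma(\tfrac12)^2=\pi$ (equivalently, the substitution $x=\sin^2\phi$ turns it into $\int_0^{\pi/2}2\,d\phi=\pi$). Hence the limit equals $\sqrt{2/\pi}\cdot\sigma_H^{-1}\cdot\pi=\sqrt{2\pi}/\sigma_H$, as asserted.

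There is no genuine obstacle here; the only points that need a little care are that the domination be uniform in $A$ — which is guaranteed because the elementary inequality $e^{-s^2/2}\le1$ produces a bound with no residual dependence on $A$ — and that the dominating envelope be integrable at \emph{both} endpoints, which is precisely the arcsine-type singularity $x^{-1/2}(1-x)^{-1/2}$ appearing after taking the limit inside.
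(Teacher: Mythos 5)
Your argument is correct and follows essentially the same route as the paper: the explicit passage-time law, the pointwise asymptotics $\sqrt{A}\,\mathbb{P}(T_{1/\sigma_H}>Ax)\to \tfrac{2}{\sqrt{2\pi}\sigma_H}x^{-1/2}$, a uniform-in-$A$ bound of order $x^{-1/2}$ justifying dominated convergence, and the arcsine integral $\int_0^1 x^{-1/2}(1-x)^{-1/2}\,dx=\pi$. The only (cosmetic) difference is that you obtain the dominating bound directly from $e^{-s^2/2}\le 1$, which is cleaner than the paper's derivative/monotonicity argument for \eqref{eq_120}.
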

	\begin{proof}[Proof of Claim \ref{claim_2}]
		From a well known formula\footnote{See, e.g., Remark $2.8.3$ in page 92 of \cite{Karatzas}}, we have
		\begin{equation}\label{hit}
		\mathbb{P}(T_{1/\sigma_H} \leq  t)=\frac{2}{\sqrt{2 \pi} }\int^{\infty}_{\frac{1}{\sigma_H\sqrt{t}}}e^{-\frac{x^2}{2}}dx.
		\end{equation}
		Applying L'H\^opital's rule, we find that
		$$
		\lim_{t\rightarrow \infty}\frac{1-\frac{2}{\sqrt{2 \pi}}\int^{\infty}_{\frac{1}{\sigma_H\sqrt{t}}}e^{-\frac{x^2}{2}}dx}{1/\sqrt{t}}=\lim_{t\rightarrow \infty}\frac{2}{\sqrt{2\pi}\sigma_H} e^{-\frac{1}{2\sigma^2_H t}}=\frac{2}{\sqrt{2\pi }\sigma_H};
		$$
		it then follows that, for every $x\in (0,1)$
		\begin{equation}\label{eq_121}
		\lim_{A\rightarrow \infty}\sqrt{A}\mathbb{P}(T_1\geq A x)=\frac{2}{\sqrt{2\pi}\sigma_H\sqrt{x}}.
		\end{equation}
	    It also follows from~\eqref{hit} that
		\begin{equation}\label{eq_120}
		\mathbb{P}(T_{1/\sigma_H}>t)\leq \frac{1}{\sigma_H\sqrt{t}},
		\end{equation}
		for all $t> 0$. 
		We prove this by computing the derivative of 
		\begin{equation}\label{func_to_derivar}
		 \frac{1}{\sigma_H\sqrt{t}}-1+\frac{2}{\sqrt{2\pi}}\int^{\infty}_{\frac{1}{\sigma_H\sqrt{t}}}e^{-\frac{x^2}{2}}dx, 
		\end{equation}
 and checking that it is negative for all $t>0$, which implies that the expression in~\eqref{func_to_derivar} is non-increasing in $(0,\infty)$. 
	 Since $\int^{\infty}_0e^{-\frac{x^2}{2}}dx=\sqrt{2\pi}/2$, \eqref{func_to_derivar} vanishes as $t\to\infty$. Hence, the equation \eqref{func_to_derivar} is greater or equal to zero for all $t> 0$, and \eqref{eq_120} follows. 
	Now \eqref{eq_121} and \eqref{eq_120} allow us to  use the Dominate Convergence Theorem to get
		\begin{equation*}
		\int^{1}_{0}\frac{\sqrt{A}\mathbb{P}(T_{1/\sigma_H}>Ax)}{\sqrt{1-x}}dx\underset{A\rightarrow \infty}{\longrightarrow} \frac{2}{\sqrt{2\pi}\sigma_H} \int^{1}_{0}\frac{1}{\sqrt{x(1-x)}}dx=\frac{2}{\sqrt{2\pi}\sigma_H}\pi=\frac{\sqrt{2\pi}}{\sigma_H}.
		\end{equation*}

	\end{proof}
	Therefore, by \eqref{eq_123} and Claim \ref{claim_2}  we obtain
	\begin{equation*}
		\lim_{A\to\infty}\lim_{|x|\to\infty} \frac{1}{\mathcal{P}_x}\mathbb{E}\left(\sum^{A|x|^2}_{i=1}[W^x_i-W^0_i]\lambda^*\right)^2 = c'\frac{ \sqrt{2\pi}}{\sigma_H}=\frac{2\sigma^2}{{\mathfrak A}\,\sigma^2_H} =: c(1).
	\end{equation*}
	and the proof  of Proposition \ref{proposition_1} is concluded for $d=1$.
	
	\subsection{Proof of Proposition \ref{proposition_1} when $\mathbf{d=2}$}\label{dim_2}
	
	Rewriting the sum in the left hand of \eqref{eq8**} we obtain
	\begin{align}\label{eq10**}
	\sum^{A|x|^2}_{k=1}\left\{\mathbb{P}(D_k=0|D_0=0)-\mathbb{P}(D_k=0|D_0=x)\right\}=\mathbb{E}_0\left(\sum^{A|x|^2}_{k=1}\mathbbm{1}_{\{D_k=0\}}\right)-\mathbb{E}_x\left(\sum^{A|x|^2}_{k=1}\mathbbm{1}_{\{D_k=0\}}\right).
	\end{align}
	Let us work first with the second expected value in the right hand side of of \eqref{eq10**}. Consider $\tau$ as defined in \eqref{tau}. Then by the Markov property we get that
	\begin{equation}\label{eq9**}
	\mathbb{E}_x\left(\sum^{A|x|^2}_{k=1}\mathbbm{1}_{\{D_k=0\}}\right)=\mathbb{E}_x\left(\sum^{A|x|^2}_{k=\tau}\mathbbm{1}_{\{D_k=0\}};\tau<A|x|^2\right)= \sum^ {A|x|^2}_{j=1}\mathbb{E}_0\left(\sum^{A|x|^2-j}_{k=0}\mathbbm{1}_{\{D_k=0\}}\right)\mathbb{P}_x(\tau=j).
	\end{equation}
	Substituting \eqref{eq9**} into \eqref{eq10**},  we find that
	\begin{align}\label{eq12**}
	&\sum^{A|x|^2}_{k=1}\left\{\mathbb{P}(D_k=0|D_0=0)-\mathbb{P}(D_k=0|D_0=x)\right\} \nonumber\\
	&=\mathbb{E}_0\left(\sum^{A|x|^2}_{k=1}\mathbbm{1}_{\{D_k=0\}}\right)-\sum^ {A|x|^2}_{j=1}\mathbb{E}_0\left(\sum^{A|x|^2-j}_{k=0}\mathbbm{1}_{\{D_k=0\}}\right)\mathbb{P}_x(\tau=j)\nonumber\\
	&=\mathbb{P}_x\left(\tau>A|x|^ 2\right)\mathbb{E}_0\left(\sum^{A|x|^2}_{k=1}\mathbbm{1}_{\{D_k=0\}}\right)\nonumber +\sum^{A|x|^2}_{j=1}\left\lbrace\mathbb{E}_0\left(\sum^{A|x|^2}_{k=1}\mathbbm{1}_{\{D_k=0\}}\right)-\mathbb{E}_0\left(\sum^{A|x|^2-j}_{k=0}\mathbbm{1}_{\{D_k=0\}}\right)\right\rbrace\mathbb{P}_x(\tau=j)\nonumber\\
	&=\mathbb{P}_x\left(\tau>A|x|^ 2\right)\mathbb{E}_0\left(\sum^{A|x|^2}_{k=1}\mathbbm{1}_{\{D_k=0\}}\right)+\sum^{A|x|^2}_{j=1}\mathbb{E}_0\left(\sum^{A|x|^2}_{k=A|x|^2-j+1}\mathbbm{1}_{\{D_k=0\}}\right)\mathbb{P}_x(\tau=j).
	\end{align}
	Then, by  equation \eqref{eq_104}  and \eqref{eq12**}, we have that
	\begin{align}\label{eq_105}
		&\frac{1}{\log|x|}\,\mathbb{E}\left(\sum_{i = 1}^{A|x|^2}\left[W_i^x - W_i^0\right]\lambda^*\right)^2 \nonumber\\
		&=\frac{2\sigma^2}{\log|x|}\,\mathbb{P}_x(\tau>A|x|^ 2)\,\mathbb{E}_0\left(\sum^{A|x|^2}_{k=1}\mathbbm{1}_{\{D_k=0\}}\right)
		+\frac{2\sigma^2}{\log|x|}\sum^{A|x|^2}_{j=1}\mathbb{E}_0\left(\sum^{A|x|^2}_{k=A|x|^2-j+1}\mathbbm{1}_{\{D_k=0\}}\right)\mathbb{P}_x(\tau=j) \nonumber\\
		&\ge \frac{2\sigma^2}{\log|x|}\,\mathbb{P}_x(\tau>A|x|^2)\sum^{A|x|^2}_{k=1}\mathbb{P}_0(D_k=0) .
	\end{align}
		By  (2.25) in \cite{FerrariFontes}, we have that
		\begin{equation}\label{mu}
		\mathbb{E}\left(W_i^x|\mathcal{F}_{i-1}\right) = \mathbb{E}[\theta_1(0)\lambda^*]=:\mu, \, \text{for } i\ge 1 \text{ and } x\in\mathbb{Z}^d.
		\end{equation}
		From this and time independence, it follows that 
		\begin{equation}\label{eq_107}
		\mathbb{E}\left[\left(W_i^y\lambda^* - W_i^0\lambda^*\right)\left(W_j^z\lambda^* - W_j^0\lambda^*\right)\right] = 0\, \text{ for } i \ne j \text{ and } y,z\in\mathbb{Z}.
		\end{equation} 
		By \eqref{series_representation} and \eqref{eq_107}  we have
	\begin{align}\label{eq_131}
	\frac{\mathbb{E}\left(\widehat{X}_{\infty}(x) - x\lambda^*\right)^2}{\mathcal{P}_x} &=\frac{1}{\mathcal{P}_x}\,\mathbb{E}\Big(\sum^{A|x|^2}_{i=1}(W^x_i-W^0_i)\lambda^*\Big)^2+\frac{1}{\mathcal{P}_x}\,\mathbb{E}\Big(\sum^{\infty}_{i=A|x|^2+1}(W^x_i-W^0_i)\lambda^*\Big)^2 \nonumber\\
	& \ge \frac{1}{\log|x|}\,\mathbb{E}\Big(\sum^{A|x|^2}_{i=1}(W^x_i-W^0_i)\lambda^*\Big)^2.
	\end{align}
	Then, by \eqref{eq_105} and \eqref{eq_131}, we  obtain
	\begin{equation}\label{eq_124}
		\frac{2\sigma^2}{\log|x|}\,\mathbb{P}_x(\tau>A|x|^ 2)\sum^{A|x|^2}_{k=1}\mathbb{P}_0(D_k=0) 
		\leq \frac{1}{\log|x|}\,\mathbb{E}\left(\sum^{A|x|^2}_{i=1}(W^x_i-W^0_i)\lambda^*\right)^2\leq \frac{\mathbb{E}\left(\widehat{X}_{\infty}(x) - x\lambda^*\right)^2}{\log |x|}.	
	\end{equation}
	By $(5.14)$ in \cite{FerrariFontes} and Theorem 1 in \cite{fukai}, when $d = 2$ we have
		\begin{align}\label{eq_140}
		\lim_{|x|\to\infty}\frac{\mathbb{E}\left(\widehat{X}_{\infty}(x) - x\lambda^*\right)^2}{\log|x|} = 
		\frac{2\sigma^2}{
		{\mathfrak A}\pi\sqrt{det(Q)}},
		\end{align}
		with $Q$ as defined in the paragraph of~\eqref{Q}. 
		
		We now want to prove that the limit as $|x|\to\infty$ of the left hand side in \eqref{eq_124} is also equal to the right hand side of \eqref{eq_140}. In order to do that, we use Lemma \ref{lemma_4}, that implies that
	\begin{equation*}
	\sum^{A|x|^2}_{k=1}\mathbb{P}_0(D_k=0) \sim \frac{1}{{\mathfrak A} 2\pi\sqrt{det(Q)}}\log(A|x|^2) \text{ as } |x|\to\infty,
	\end{equation*}
	and by  the corollary of Theorem $1$ in \cite{rrowe},
	$$
	\mathbb{P}_x(\tau>A|x|^2)=\frac{[1+o(1)]2\log(|x|)}{\log(A|x|^2)} \text{ for all } x \ne 0 \text{ and  }A\geq 1.
	$$
Therefore,
	\begin{equation}\label{eq_132}
		\lim_{|x|\to\infty}\frac{2\sigma^2}{\log|x|}\,\mathbb{P}_x(\tau>A|x|^ 2) 
		\sum^{A|x|^2}_{k=1}\mathbb{P}_0(D_k=0) 
		= \frac{2\sigma^2}{{\mathfrak A}\pi\sqrt{det(Q)}}.
	\end{equation} 
	Then by \eqref{eq_124}, \eqref{eq_140} and \eqref{eq_132}  we conclude that  
	\begin{equation}\label{eq_76}
	\underset{|x|\rightarrow\infty}{\lim}\frac{1}{\log|x|}\mathbb{E}\left(\sum^{A|x|^2}_{i=1}(W^x_i-W^0_i)\lambda^*\right)^2
	=\frac{2\sigma^2}{{\mathfrak A}\pi\sqrt{det(Q)}}:=c(2), \, \text{ for all }A\geq 1,
	\end{equation}
	thus completing the proof of Proposition \ref{proposition_1} for $d=2$.

	\section{Corollary of Proposition \ref{proposition_1}}\label{sec_corollary_1}
	
	By the Cram\'er-Wold theorem, convergence in distribution of a sequence of random vectors is equivalent to that of  arbitrary linear combinations of its coordinates. So, in order to obtain Theorem \ref{Thm_2}, it suffices that we state and prove in this section the following result. 
	
		\begin{corollary}\label{corollary_1} Given $k\ge 1$, let $\bar{\alpha}=(\alpha_1,\dots,\alpha_k)\in\mathbb{R}^k$. Let us also consider  $c$  and $\mathcal{P}_n$ as in Propostion \ref{proposition_1}. 
		\begin{enumerate}
			\item[(i)] In case $d = 1$, for $\bar{t} = (t_1,\dots,t_k)\in\mathbb{R}^k$ with $0< t_1,\dots< t_k$,  the following limit exits
			\begin{equation*}
			g(A,\bar{t},\bar{\alpha}):=\lim_{n\to\infty}\frac{1}{c\mathcal{P}_n}\mathbb{E}\left[\Big(\sum_{j=1}^k\alpha_j\sum_{i=1}^{An^2}\left[W_i^{\lfloor nt_j\rfloor} - W_i^0\right]\lambda\Big)^2\right], \text{ for } A\ge 1,
			\end{equation*}
			and 
			\begin{equation*}
			\lim_{A\to\infty} g(A,\bar{t},\bar{\alpha}) = \sum_{j=1}^k\alpha_j^2t_j + 2\sum_{1\le j < l \le k}\alpha_j\alpha_lt_{j}.
			\end{equation*}
			
			\item[(ii)] In case $d = 2$,  for $\bar{z}=(z_1,\dots, z_k)\in(\mathbb{Z}^{2})^k$ and $\tilde{x}_n$ as defined in \eqref{eq_scale_dim_2} we have
			\begin{align*}
			g(\bar{z}, \bar{\alpha}) &:= \lim_{n\to\infty}\frac{1}{c\mathcal{P}_n}\mathbb{E}\left[\Big(\sum_{j=1}^k\alpha_j\sum_{i=1}^{ M_{k,n}}\left[W_i^{\tilde{x}_n(z_j)} - W_i^0\right]\lambda^*\Big)^2\right]\\& \hspace{0,1 cm}= \Big(\sum_{j=1}^k\alpha_j^2\max\{|z_j(1)|,|z_j(2)|\}+\sum_{1\le j < l\le k}\alpha_j\alpha_{l}\min \{\max\{|z_l(1)|,|z_l(2)|\},\max\{|z_j(1)|,|z_j(2)|\}\}\Big),
			\end{align*}
			where $M_{k,n}=\underset{ 1\leq j\leq k}{\max}|\tilde{x}_n(z_j)|^2$.
		\end{enumerate}	
	\end{corollary}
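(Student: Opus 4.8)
The plan is to reduce Corollary~\ref{corollary_1} to the single-increment statements already proved in Proposition~\ref{proposition_1} by expanding the square of the linear combination and exploiting the cross-term identity~\eqref{eq_107}. First I would write, for a fixed upper limit $N$ (to be taken as $An^2$ in case $d=1$ and as $M_{k,n}$ in case $d=2$),
\begin{equation*}
\mathbb{E}\Big[\Big(\sum_{j=1}^k\alpha_j\sum_{i=1}^{N}\big[W_i^{x_j}-W_i^0\big]\lambda^*\Big)^2\Big]
=\sum_{j,l=1}^k\alpha_j\alpha_l\sum_{i=1}^{N}\mathbb{E}\big[(W_i^{x_j}-W_i^0)\lambda^*\,(W_i^{x_l}-W_i^0)\lambda^*\big],
\end{equation*}
where $x_j=\lfloor nt_j\rfloor$ (resp. $\tilde x_n(z_j)$); the vanishing of the $i\ne i'$ terms is exactly~\eqref{eq_107}. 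Then I would use the polarization identity
\begin{equation*}
(W_i^{x_j}-W_i^0)(W_i^{x_l}-W_i^0)=\tfrac12\Big[(W_i^{x_j}-W_i^0)^2+(W_i^{x_l}-W_i^0)^2-(W_i^{x_j}-W_i^{x_l})^2\Big]
\end{equation*}
(multiplied by $\lambda^*$ appropriately) to express each mixed covariance in terms of diagonal quantities of the form treated in Proposition~\ref{proposition_1}. The key analytic input is that $\mathbb{E}\big(\sum_{i=1}^{N}(W_i^{y}-W_i^{y'})\lambda^*\big)^2$ depends on $y,y'$ only through $y-y'$ and, by translation invariance of the chain $D$ and identity~\eqref{eq_104}, equals $2\sigma^2\sum_{i=1}^N\{\mathbb{P}(D_i=0\mid D_0=0)-\mathbb{P}(D_i=0\mid D_0=y-y')\}$, so each such term is governed by the same asymptotics as in Proposition~\ref{proposition_1} with $|x|$ replaced by $|y-y'|$.

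For case $(i)$, $d=1$: with $x_j=\lfloor nt_j\rfloor$ one has $|x_j|\sim n t_j$ and $|x_j-x_l|\sim n|t_j-t_l|$, while the summation limit is $An^2$. Applying Proposition~\ref{proposition_1} requires matching $An^2$ against $\tilde A|x_j-x_l|^2$; since $An^2 = (A/|t_j-t_l|^2)\cdot (n|t_j-t_l|)^2$, the relevant effective constant is $A/(t_j-t_l)^2$, and $h$ scales like $\mathcal{P}_x=|x|$, i.e. $\frac{1}{n}\mathbb{E}(\cdots)^2\to |t_j-t_l|\,h\big(A/(t_j-t_l)^2\big)$ up to the normalization by $c\mathcal{P}_n = cn$. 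Taking $A\to\infty$ and using $\lim_{A\to\infty}h(A)=c$ from Proposition~\ref{proposition_1}, the diagonal terms contribute $\alpha_j^2 t_j$ and the polarized cross-terms contribute $-\tfrac12|t_j-t_l|\cdot\alpha_j\alpha_l$ plus the two half-diagonals $\tfrac12(t_j+t_l)\alpha_j\alpha_l$, so that the $j<l$ coefficient collapses to $\alpha_j\alpha_l\cdot\tfrac12(t_j+t_l-|t_j-t_l|)=\alpha_j\alpha_l\min\{t_j,t_l\}=\alpha_j\alpha_l t_j$ (using $t_j<t_l$). This reproduces the Brownian covariance $\min\{t_j,t_l\}$ and gives the stated formula. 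I would need to carry along the $n$-rounding from $\lfloor\cdot\rfloor$ (harmless, since ratios of floors converge) and check that the $h$-limit can be invoked uniformly enough; the intermediate existence claim $g(A,\bar t,\bar\alpha)$ exists follows because it is a finite linear combination of quantities each of which converges by Proposition~\ref{proposition_1}.

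For case $(ii)$, $d=2$: here $\mathcal{P}_n=\log n$, $\tilde x_n(z_j)=(\lfloor n^{|z_j(1)|}\rfloor,\lfloor n^{|z_j(2)|}\rfloor)$ so $|\tilde x_n(z_j)|\asymp n^{\max\{|z_j(1)|,|z_j(2)|\}}$ and $\log|\tilde x_n(z_j)|\sim \max\{|z_j(1)|,|z_j(2)|\}\log n$; similarly $\log|\tilde x_n(z_j)-\tilde x_n(z_l)|\sim \max\{\max_i|z_j(i)|,\max_i|z_l(i)|\}\log n = (|z_j|_\infty\vee|z_l|_\infty)\log n$, since the larger coordinate dominates the difference. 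Because Proposition~\ref{proposition_1} in $d=2$ shows the limit is the \emph{same} constant $c(2)$ for every $A\ge1$, and since $M_{k,n}$ is $|\tilde x_n(z_{j^*})|^2$ for the index $j^*$ maximizing $|z|_\infty$, each diagonal term $\frac1{\log n}\mathbb{E}(\sum_{i=1}^{M_{k,n}}(W_i^{\tilde x_n(z_j)}-W_i^0)\lambda^*)^2$ converges after dividing by $c$ to $\max\{|z_j(1)|,|z_j(2)|\}$, and each polarized difference term converges to $|z_j|_\infty\vee|z_l|_\infty$. The $j<l$ coefficient then becomes $\alpha_j\alpha_l\cdot\tfrac12\big((|z_j|_\infty)+(|z_l|_\infty)-(|z_j|_\infty\vee|z_l|_\infty)\big)=\alpha_j\alpha_l\cdot\tfrac12(|z_j|_\infty\wedge|z_l|_\infty)$, matching the claimed off-diagonal entry $\tfrac12\min\{\max_i|z_l(i)|,\max_i|z_j(i)|\}$.

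The main obstacle I anticipate is making the asymptotic substitutions rigorous when the summation cutoff $N$ does not match the canonical $\tilde A\,|y-y'|^2$ form term-by-term — in $d=1$ this forces carrying the function $h(\cdot)$ with an argument rescaled by $(t_j-t_l)^{-2}$ and invoking its $A\to\infty$ limit, while in $d=2$ one must verify that replacing $A|x|^2$ by the common cutoff $M_{k,n}$ (which corresponds to a power $n$ of a \emph{different} exponent than $|y-y'|$) still lands in the regime covered by Proposition~\ref{proposition_1}, i.e. that the cutoff is of the form $\Theta(|y-y'|^{2\kappa})$ for some $\kappa\ge 1$ and that the $d=2$ asymptotics (being insensitive to polynomial-in-$|x|$ changes of the cutoff) still give the constant $c(2)$; this insensitivity is precisely what~\eqref{eq_132} and the argument around~\eqref{eq_124} deliver, but it should be spelled out. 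A secondary bookkeeping point is that when $z_j(1)=\pm z_j(2)$ or when two $z_j$ coincide in $|z|_\infty$ the "dominant coordinate" heuristic for $\log|\tilde x_n(z_j)-\tilde x_n(z_l)|$ must be checked directly, though in all cases the logarithm of a sum of powers of $n$ is asymptotic to the logarithm of the largest power, so no genuine difficulty arises.
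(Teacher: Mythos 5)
Your proposal is correct and follows essentially the same route as the paper: expand the square, kill the $i\ne m$ cross terms via \eqref{eq_107}, and reduce each same-$i$ covariance to Green's-function differences of the chain $D$, which are then controlled by the asymptotics established in the proof of Proposition \ref{proposition_1} (with the rescaled argument $h(A/(t_j-t_l)^2)$ in $d=1$ and the $A$-insensitive constant $c(2)$ together with the cutoff-robustness remark in $d=2$). Your polarization identity is just an algebraic repackaging of the paper's expansion \eqref{eq_142}--\eqref{eq_160} into centered products, and it produces the identical three difference terms and the same final arithmetic, so there is nothing substantively different to compare.
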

	The proof of Corollary \ref{corollary_1} contains two parts, one for each dimension, but before starting, let us point out something useful for both dimensions. For $x,y\in\mathbb{Z}$ and $\mu$ as in \eqref{mu} we have
	\begin{align}\label{eq_142}
	&\left(W_i^x\lambda^* - W_i^0\lambda^*\right)\left(W_i^y \lambda^*- W_i^0\lambda^*\right)\nonumber\\ &=\left(W_i^x \lambda^*- \mu\right)\left(W_i^y\lambda^* -\mu\right) - \left(W_i^x\lambda^* - \mu\right)\left(W_i^0\lambda^* -\mu\right)
	-\left(W_i^0\lambda^* - \mu\right)\left(W_i^y\lambda^* -\mu\right) + \left(W_i^0 \lambda^*- \mu\right)^2.
	\end{align}
	By the translation invariance of the model, and reasoning as in \cite{FerrariFontes} to get  $(5.7)$,  we obtain
	\begin{equation}\label{eq_143}
	\mathbb{E}\left[\left(W_i^x\lambda -\mu\right)\left(W_i^y\lambda -\mu\right)\right] = \mathbb{E}\left[\left(W_i^{x - y}\lambda -\mu\right)\left(W_i^0 -\mu\right)\right] =\sigma^2 \mathbb{P}\left(D_{i-1} = 0|D_0 = x -y\right).
	\end{equation}
	Using now \eqref{eq_142} and \eqref{eq_143}  we get
	\begin{align}\label{eq_160}
	\mathbb{E}\left[\left(W_i^x\lambda - W_i^0\lambda\right)\left(W_i^y \lambda- W_i^0\lambda\right)\right] &= \sigma^2\left\{ \mathbb{P}\left(D_{i-1} = 0|D_0 = x -y\right) - \mathbb{P}\left(D_{i-1} = 0|D_0 = x\right)\right\}\nonumber\\
	& - \sigma^2\left\{\mathbb{P}\left(D_{i-1} = 0|D_0 = y\right) + \mathbb{P}\left(D_{i-1} = 0|D_0 =  0 \right)\right\}.
	\end{align}
	
	\subsection{Proof of Corollary \ref{corollary_1} in $\mathbf{d=1}$.} 
	Notice that
	\begin{align}\label{eq_204}
	\mathbb{E}\left[\left(\sum_{j=1}^k\alpha_j\sum_{i=1}^{An^2}\left[W_i^{\lfloor nt_j\rfloor} - W_i^0\right]\lambda\right)^2\right] &= \sum_{j=1}^k\alpha_j^2\mathbb{E}\left[\left(\sum_{i=1}^{An^2}\left[W_i^{\lfloor nt_j\rfloor} - W_i^0\right]\lambda\right)^2\right] \nonumber\\
	&+2\sum_{1 \le j < l \le k}\alpha_j\alpha_l\sum_{i=1}^{An^2}\sum_{m=1}^{An^2}\mathbb{E}\left[\left(\left[W_i^{\lfloor nt_j\rfloor} - W_i^0\right]\lambda\right)\left(\left[W_m^{\lfloor nt_l\rfloor} - W_m^0\right]\lambda \right]\right). 
	\end{align}
	By \eqref{eq_107} we have that
	\begin{equation}\label{eq_202}
	\mathbb{E}\left[\left(\sum_{i=1}^{An^2}\left[W_i^{\lfloor nt_j\rfloor} - W_i^0\right]\lambda\right)^2\right] = \sum_{i=1}^{An^2}\mathbb{E}\left[\left(\left[W_i^{\lfloor nt_j\rfloor} - W_i^0\right]\lambda\right)^2\right]
	\end{equation}
	and 
	\begin{equation}\label{eq_203}
	\sum_{i=1}^{An^2}\sum_{m=1}^{An^2}\mathbb{E}\left[\left(\left[W_i^{\lfloor nt_j\rfloor} - W_i^0\right]\lambda\right)\left(\left[W_m^{\lfloor nt_l\rfloor} - W_m^0\right]\lambda \right]\right)  = \sum_{i=1}^{An^2}\mathbb{E}\left[\left(\left[W_i^{\lfloor nt_j\rfloor} - W_i^0\right]\lambda\right)\left(\left[W_i^{\lfloor nt_l\rfloor} - W_i^0\right]\lambda \right]\right).
	\end{equation}
	By Proposition \ref{proposition_1} and \eqref{eq_202}, we have that
	\begin{align}\label{eq_205}
		\lim_{n\to\infty}  \frac{1}{c\mathcal{P}_n} \sum_{j=1}^k \alpha_j^2\mathbb{E}\left[\left(\sum_{i=1}^{An^2}\left[W_i^{\lfloor nt_j\rfloor} - W_i^0\right]\lambda\right)^2\right] &= \lim_{n\to\infty}  \frac{1}{c\mathcal{P}_n} \sum_{i=1}^{An^2}\mathbb{E}\left[\left(\left[W_i^{\lfloor nt_j\rfloor} - W_i^0\right]\lambda\right)^2\right] \nonumber\\
		&= \frac{1}{c} \sum_{j = 1}^k\alpha_j^2t_jh(A/t_j^2) \nonumber\\
		&\to \sum_{j = 1}^k\alpha_j^2t_j, \text{ as } A\to\infty.
	\end{align}
	Hence, by \eqref{eq_204},\eqref{eq_203} and \eqref{eq_205}, to finish the proof of the corollary for dimension 1, it is enough to compute 
	\begin{equation}\label{eq_191}
	\lim_{n\to\infty}\frac{1}{\mathcal{P}_n}\sum_{i=1}^{An^2}\mathbb{E}\left[\left(\left[W_i^{\lfloor nt_j\rfloor} - W_i^0\right]\lambda\right)\left(\left[W_i^{\lfloor nt_l\rfloor} - W_i^0\right]\lambda \right]\right) \, \text{ for }   1 \le j < l\le k.
	\end{equation}
	and its limit as $A\to\infty$.
	By $\eqref{eq_160}$ we obtain
	\begin{align}\label{eq_161}
	&\sum_{i=1}^{An^2}\mathbb{E}\left[\left(W_i^{\lfloor nt_j\rfloor}\lambda - W_i^0\lambda\right)\left(W_i^{\lfloor nt_l\rfloor}\lambda - W_i^0\lambda\right)\right]\nonumber\\
	&=\sigma^2\sum_{i=1}^{An^2}\left\{\mathbb{P}\left(D_{i-1} = 0|D_0 = \lfloor nt_l\rfloor - \lfloor nt_j \rfloor\right) - \mathbb{P}\left(D_{i-1} = 0|D_0 = \lfloor nt_j \rfloor\right)\right\}\nonumber \\
	&+\sigma^2\sum_{i=1}^{An^2}\left\{- \mathbb{P}\left(D_{i-1} = 0|D_0 =\lfloor nt_l\rfloor \right) + \mathbb{P}\left(D_{i-1} = 0|D_0 = 0\right)\right\}\nonumber\\
	&=-\sigma^2\sum_{i=1}^{An^2}\left\{\mathbb{P}\left(D_{i-1} = 0|D_0 =0\right) -\mathbb{P}\left(D_{i-1} = 0|D_0 = \lfloor nt_l\rfloor - \lfloor nt_j \rfloor\right)\right\}\nonumber\\
	&+\sigma^2\sum_{i=1}^{An^2}\left\{\mathbb{P}\left(D_{i-1} = 0|D_0 = 0\right)- \mathbb{P}\left(D_{i-1} = 0|D_0 = \lfloor nt_j \rfloor\right)\right\} \nonumber\\
	&+\sigma^2\sum_{i=1}^{An^2}\left\{\mathbb{P}\left(D_{i-1} = 0|D_0 =0\right)  - \mathbb{P}\left(D_{i-1} = 0|D_0 =\lfloor nt_l\rfloor \rfloor\right)\right\},
	\end{align}
	and, from \eqref{eq_104}, \eqref{eq8*}  and \eqref{eq_161}, we find that
	\begin{align*}
		&\lim_{n\to\infty}\frac{1}{c\mathcal{P}_N}\sum_{i=1}^{An^2}\mathbb{E}\left[\left(W_i^{\lfloor nt_j\rfloor}\lambda - W_i^0\lambda\right)\left(W_i^{\lfloor nt_l\rfloor}\lambda - W_i^0\lambda\right)\right] \\
		&= -(t_l - t_j)\frac{h(A/(t_l-t_j)^2)}{2c} + t_j\frac{h(A/(t_l-t_j)^2)}{2c}  + t_l\frac{h(A/(t_l - t_j)^2)}{2c}\\
		&\to t_j \text{ as } A\to\infty,
	\end{align*}
	thus concluding the proof of Corollary \ref{corollary_1} in $d=1$.
		
	\subsection{Proof of Corollary \ref{corollary_1} in $\mathbf{d=2}$}  By \eqref{eq_204}, \eqref{eq_202} and \eqref{eq_203}, we get that
	\begin{align}\label{eq_166}
	&\frac{1}{c\mathcal{P}_n}\mathbb{E}\left[\left(\sum_{j=1}^k\alpha_j\sum_{i=1}^{ M_{k,n}}\left[W_i^{\tilde{x}_n(z_j)} - W_i^0\right]\lambda^*\right)^2\right]\nonumber\\
	&= \frac{1}{c\mathcal{P}_n}\sum_{j=1}^k\alpha^2_j\sum_{i=1}^{M_{k,n}}\mathbb{E}\left([W^{\tilde{x}_n(z_j)}_i-W^0_i]\lambda^*\right)^2\nonumber\\
	&+ \frac{2}{c\mathcal{P}_n}\sum_{1\leq j<l\leq k}\alpha_j\alpha_l\sum_{i=1}^{M_{k,n}}\mathbb{E}\left(W^{\tilde{x}_n(z_j)}_i\lambda^*-W^0_i\lambda^*\right)\left(W^{\tilde{x}_n(z_l)}_i\lambda^*-W^0_i\lambda^*\right).
	\end{align}
	Let us start with the first term in the right hand side of \eqref{eq_166}. For each fixed $j$, we have that
	\begin{align}\label{eq_167}
	\sum_{i=1}^{|\tilde{x}_n(z_j)|^2}\mathbb{E}\left(W^{\tilde{x}_n(z_j)}_i\lambda^*-W^0_i\lambda^*\right)^2\leq \sum_{i=1}^{M_{k,n}}\mathbb{E}\left(W^{\tilde{x}_n(z_j)}_i\lambda^*-W^0_i\lambda^*\right)^2\leq \sum_{i=1}^{\infty}\mathbb{E}\left(W^{\tilde{x}_n(z_j)}_i\lambda^*-W^0_i\lambda^*\right)^2.
	\end{align}
	Using equations \eqref{eq_140}, \eqref{eq_76}  and \eqref{eq_167}, we have that 
	\begin{equation*}
	 \lim_{n\to\infty}\frac{1}{c\log(|\tilde{x}_n(z_j)|)}\sum_{i=1}^{M_{k,n}}\mathbb{E}\left(W^{\tilde{x}_n(z_j)}_i\lambda^*-W^0_i\lambda^*\right)^2 = 1.
	\end{equation*}
	Hence, for every fixed $j$, we have that 
	\begin{align}\label{eq_168}
	\lim_{n\to\infty}\frac{1}{c\mathcal{P}_n}\sum_{i=1}^{M_{k,n}}\mathbb{E}\left(W^{\tilde{x}_n(z_j)}_i\lambda^*-W^0_i\lambda^*\right)^2 &= \lim_{n\to\infty} \frac{\log(|\tilde{x}_n(z_j)|)}{\mathcal{P}_n} \nonumber\\
	&=\lim_{n\to\infty}\frac{\log(n^{\max\{|z_j(1)|,|z_j(2)|\}}(1+o(1)))}{\mathcal{P}_n} \nonumber\\ &=\max\{|z_j(1)|,|z_j(2)|\}.
	\end{align}
	Therefore
	\begin{equation*}
		\lim_{n\to\infty}\frac{1}{c\mathcal{P}_n}\sum_{j=1}^k\alpha^2_j\sum_{i=1}^{M_{k,n}}\mathbb{E}\left([W^{\tilde{x}_n(z_j)}_i-W^0_i]\lambda^*\right)^2 = \sum^k_{j=1}\alpha^2_j\max\{|z_j(1)|,|z_j(2)|\}.
	\end{equation*} 
	To deal with the second term on the right member of  \eqref{eq_166} observe that, for $1\le j < l \le k$, by \eqref{eq_160}  we have 
	\begin{align}\label{eq_169}
	&\sum_{i=1}^{M_{k,n}}\mathbb{E}\left[(W^{\tilde{x}_n(z_j)}_i\lambda^*-W^0_i\lambda^*)(W^{\tilde{x}_n(z_l)}_i\lambda^*-W^0_i\lambda^*)\right]\nonumber\\
	&=\sum_{i=1}^{M_{k,n}}\sigma^2\mathbb{P}(D_{i-1}=0|D_0=\tilde{x}_n(z_l)-\tilde{x}_n(z_j))-\sum_{i=1}^{M_{k,n}}\sigma^2\mathbb{P}(D_{i-1}=0|D_0=\tilde{x}_n(z_j)))\nonumber\\
	&\qquad-\sum_{i=1}^{M_{k,n}}\sigma^2\mathbb{P}(D_{i-1}=0|D_0=\tilde{x}_n(z_{l}))+\sum_{i=1}^{M_{k,n}}\sigma^2\mathbb{P}(D_{i-1}=0|D_0= 0 )),.
	\end{align}
	Notice that, for any $M(x) \ge |x|^2$, we may obtain 
	\begin{equation*}
	\lim_{|x|\rightarrow \infty}\frac{1}{\log|x|}\sum_{i=1}^{M(x)}\sigma^2\left\{\mathbb{P}(D_{i-1}=0|D_0=0)-\mathbb{P}(D_{i-1}=0|D_0= x)\right\} = c
	\end{equation*}
	 in the same way as in the proof of \eqref{eq8**}. Hence
	\begin{align}\label{eq_170}
	&\lim_{n\rightarrow \infty}\frac{1}{c\mathcal{P}_n}\sum_{i=1}^{M_{k,n}}\sigma^2\left\{\mathbb{P}(D_{i-1}=0|D_0=0)-\mathbb{P}(D_{i-1}=0|D_0=\tilde{x}_n(z_{l})-\tilde{x}_n(z_{j}))\right\}\nonumber\\
	&=\lim_{n\rightarrow \infty}\frac{\log(|\tilde{x}_n(z_{l})-\tilde{x}_n(z_{j})|)}{2\log n}\nonumber \\
	&=\lim_{n\rightarrow \infty}\frac{\log[n^{\max\{|z_j(1)|,|z_j(2)|,|z_l(1)|,|z_l(2)|\}}(1+o(1))]}{2\log n} \nonumber\\
	&=\frac{\max\{|z_j(1)|,|z_j(2)|,|z_l(1)|,|z_l(2)|\}}{2}.
	\end{align}
	Then, subtracting  $\sum_{i=1}^{M_{k,n}}\sigma^2\mathbb{P}(D_{i-1}=0|D_0=0)$  from the right hand side of \eqref{eq_169}, and using \eqref{eq_170}, we find that
	\begin{align*}
		&\lim_{n\to\infty}\frac{1}{c\mathcal{P}_n}\sum_{i=1}^{M_{k,n}}\mathbb{E}\left[(W^{\tilde{x}_n(t_j)}_i\lambda^*-W^0_i\lambda^*)(W^{\tilde{x}_n(t_l)}_i\lambda^*-W^0_i\lambda^*)\right]\\
		& = -\frac{\max\{|z_j(1)|,|z_j(2)|,|z_l(1)|,|z_l(2)|\}}{2}+\frac{\max\{|z_l(1)|,|z_l(2)|\}}{2}+\frac{\max\{|z_j(1)|,|z_j(2)|\}}{2} \\
		&= \frac{\min \{\max\{|z_l(1)|,|z_l(2)|\},\max\{|z_j(1)|,|z_j(2)|\}\}}{2}. 
	\end{align*}
	Therefore,
	\begin{align*}
		&\lim_{n\to\infty}\frac{2}{c\mathcal{P}_n}\sum_{1\leq j<l\leq k}\alpha_j\alpha_l\sum_{i=1}^{M_{k,n}}\mathbb{E}\left(W^{\tilde{x}_n(t_j)}_i\lambda^*-W^0_i\lambda^*\right)\left(W^{\tilde{x}_n(t_l)}_i\lambda^*-W^0_i\lambda^*\right)\\
		& = \sum^{k}_{1\le j < l\le k}\alpha_l\alpha_j\min \{\max\{|z_l(1)|,|z_l(2)|\},\max\{|z_j(1)|,|z_j(2)|\}\},
	\end{align*} 
	and the proof of Corollary \ref{corollary_1} for $d=2$ is finished.

	\section{Proof of Proposition \ref{proposition_2} }\label{Section3}
	
	In this section, we prove Proposition \ref{proposition_2}. We follow the strategy adopted in the proof of Theorem $4.1$ in \cite{FerrariFontes}. The main difference in our case is in the following Lemma \ref{LosA}, which is analogous to Lemma 4.3 in \cite{FerrariFontes}, on the one hand, but the proof of the latter result does not apply in our more general, not necessarily nearest neighbor case.
		\begin{lemma}\label{LosA}Let us consider $d = 1,2$. Given any positive integer $K$, we have that
		\begin{equation*}
			\sum^{n}_{i=j}\left[\mathbb{P}(D_i=0|D_j=l)-\mathbb{P}(D_i=0|D_j=l')\right],
		\end{equation*}
		is uniformly bounded in $n$, $j$, $l$ and $l'$ such that $|l-l'|\leq K$.
	\end{lemma}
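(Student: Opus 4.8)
The plan is the following. By time-homogeneity $\mathbb{P}(D_i=0|D_j=l)=\mathbb{P}_l(D_{i-j}=0)$, so, setting $N:=n-j$, the quantity to be bounded equals $\sum_{m=0}^{N}\bigl[\mathbb{P}_l(D_m=0)-\mathbb{P}_{l'}(D_m=0)\bigr]$. Writing the summand as $[\mathbb{P}_0(D_m=0)-\mathbb{P}_{l'}(D_m=0)]-[\mathbb{P}_0(D_m=0)-\mathbb{P}_l(D_m=0)]$ and applying \eqref{eq11} (legitimate since $D$, a finite modification of the recurrent walk $H$, is recurrent for $d=1,2$) to each of these two differences, we get
\[
\sum_{m=0}^{N}\bigl[\mathbb{P}_l(D_m=0)-\mathbb{P}_{l'}(D_m=0)\bigr]=\sum_{m=0}^{N}\mathbb{P}_0(D_{N-m}=0)\,\bigl[\mathbb{P}_{l'}(\tau>m)-\mathbb{P}_l(\tau>m)\bigr],
\]
with $\tau$ as in \eqref{tau}. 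Moreover, because the weights $\{u_n(i,i+\cdot)\}$ are i.i.d.\ in $(n,i)$, the rows $u_1(0,\cdot)$ and $u_1(x,\cdot)$ are independent whenever $x\neq 0$, so the chain $D$ has the same one-step transitions as the random walk $H$ at every site other than the origin; hence the law of the trajectory of $D$ on $\{\tau>m\}$ coincides with that of $H$ on the event of avoiding $0$ up to time $m$, and in particular $\mathbb{P}_x(\tau>m)=\mathbb{P}^H_x(\tau^H_0>m)$, where $\tau^H_0:=\inf\{k\ge 0:H_k=0\}$. It therefore suffices to bound, uniformly in $N\ge 0$ and in $l,l'$ with $|l-l'|\le K$, the sum $\sum_{m=0}^{N}\mathbb{P}_0(D_{N-m}=0)\,\bigl|\mathbb{P}^H_{l'}(\tau^H_0>m)-\mathbb{P}^H_{l}(\tau^H_0>m)\bigr|$.

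Two ingredients enter. First, from Lemma \ref{lemma_4} and the monotonicity of $n\mapsto\mathbb{P}_0(D_n=0)$ (Lemma 3.1 of \cite{FerrariFontes}) one gets $\mathbb{P}_0(D_n=0)\le C(n+1)^{-1/2}$ when $d=1$ and $\mathbb{P}_0(D_n=0)\le C(n+1)^{-1}$ when $d=2$. Second, and this is the crux, the uniform gradient estimate: for $|l-l'|\le K$,
\[
\bigl|\mathbb{P}^H_{l}(\tau^H_0>m)-\mathbb{P}^H_{l'}(\tau^H_0>m)\bigr|\le
\begin{cases}
C_K\,(m+1)^{-1/2}, & d=1,\\
C_K\,\bigl(\log(m+2)\bigr)^{-1}, & d=2,
\end{cases}
\]
with $C_K$ depending only on $K$ and the law of $H$. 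This is a statement about the genuine random walk $H$ (finite range, mean zero, symmetric, recurrent for $d\le 2$). For $d=1$, I would prove it by coupling two copies of $H$ started at $l$ and $l'$ with common increments: this realizes the left-hand side as $|\mathbb{P}(\widetilde\tau_{-l}>m)-\mathbb{P}(\widetilde\tau_{-l'}>m)|$ for a single walk started at $0$ and the hitting times $\widetilde\tau_a$ of the nearby points $a\in\{-l,-l'\}$, and a strong-Markov decomposition at those hitting times reduces matters to the one-dimensional bounds $\mathbb{P}_z(\tau_0>j)\le C(|z|+1)(j+1)^{-1/2}$ and $\mathbb{P}_z(\tau_0=j)\le C(|z|+1)(j+1)^{-3/2}e^{-c|z|^2/(j+1)}$ for $|z|\le K$ (the Gaussian factor being what controls the delicate regime $m\asymp|l|^2$), whence the claim after a routine summation. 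For $d=2$ the analogous estimate follows from the two-dimensional local CLT, the logarithmic behaviour of the potential kernel, and the estimate for $\mathbb{P}_x(\tau>A|x|^2)$ from \cite{rrowe} used in the proof of Proposition \ref{proposition_1}, with the range $m\lesssim|l|^2$ absorbed into $C_K$ via the trivial bound $\le 1\le C_K(\log(m+2))^{-1}$.

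Granting these, the proof is completed by summation. When $d=1$, the sum in question is at most $CC_K\sum_{m=0}^{N}(N-m+1)^{-1/2}(m+1)^{-1/2}$, a Riemann sum for $CC_K\int_0^1 (y(1-y))^{-1/2}\,dy=\pi CC_K$, hence bounded uniformly in $N$. When $d=2$, it is at most $CC_K\sum_{m=0}^{N}(N-m+1)^{-1}(\log(m+2))^{-1}$; splitting at $m=\lfloor N/2\rfloor$, the part with $m\le N/2$ is at most $(N/2+1)^{-1}\sum_{m\le N/2}(\log(m+2))^{-1}\le(\log 2)^{-1}$, while the part with $m>N/2$ is at most $2(\log(N+2))^{-1}\sum_{m>N/2}(N-m+1)^{-1}\le 2(\log(N+2))^{-1}\bigl(1+\log(N+2)\bigr)$, so the total is again bounded uniformly in $N$. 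This gives Lemma \ref{LosA}, with the bound uniform in $n,j,l,l'$ subject to $|l-l'|\le K$.

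The step I expect to be the main obstacle is the uniform-in-$l$ gradient estimate of the second ingredient: it must hold for every $m$, in particular through the intermediate window $m\asymp|l|^2$ in which the invariance-principle and potential-kernel asymptotics for the walk do not apply directly, so one is forced to use sharp first-passage and local-CLT estimates for general finite-range walks. This is exactly where the nearest-neighbour argument of \cite{FerrariFontes} (Lemma 4.3 there), which relies on explicit formulas and monotonicity special to the simple chain, does not carry over, and why the present, more robust route is needed.
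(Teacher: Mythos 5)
Your reduction via \eqref{eq11} to bounding $\sum_{m=0}^{N}\mathbb{P}_0(D_{N-m}=0)\bigl[\mathbb{P}_{l'}(\tau>m)-\mathbb{P}_l(\tau>m)\bigr]$, the identification of the law of $\tau$ under $D$ and $H$, and the two convolution summations at the end are all sound, and partly coincide with ingredients of the paper's proof. The genuine gap is exactly the step you flag as the main obstacle: the pointwise gradient estimate $|\mathbb{P}_l(\tau>m)-\mathbb{P}_{l'}(\tau>m)|\le C_K(m+1)^{-1/2}$ (resp.\ $C_K(\log(m+2))^{-1}$) uniformly over starting points $l,l'$ that may be arbitrarily far from the origin. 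As written this is asserted, not proved. In $d=1$ your sketch rests on a first-passage bound $\mathbb{P}_z(\tau_0=j)\le C(|z|+1)(j+1)^{-3/2}e^{-c|z|^2/(j+1)}$ stated ``for $|z|\le K$'', where the Gaussian factor is vacuous, while you simultaneously say that this factor is what controls the window $m\asymp|l|^2$; so the estimate you actually need is for unbounded $|z|$ and for general finite-range (non-nearest-neighbour, merely mean-zero) walks, and neither a proof nor a reference is supplied. In $d=2$ the treatment of the regime $m\lesssim|l|^2$ via ``$1\le C_K(\log(m+2))^{-1}$'' is false once $m$ (hence $|l|$) is large; what would save that regime is that both tails are within $O(1/\log|l|)$ of $1$ there, but that again is a quantitative hitting-time estimate you do not establish. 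So the crux of the lemma is left open.

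For comparison, the paper sidesteps the need for any such gradient estimate by a translation-invariance rearrangement: after reducing to the spatially homogeneous chain $H$, it writes $\sum_{k}\mathbb{P}_0(H_k=l')=\mathbb{E}_{L}\bigl(\sum_{k}\mathbf{1}_{\{H_k=l\}}\bigr)$ with $L=l-l'$, and decomposes this at the hitting time of $0$. This moves the small parameter $|L|\le K$ into the starting point, so the only inputs required are the tail of $\tau$ started from a \emph{bounded} point (P4 in \cite{Spitzer} for $d=1$, the corollary in \cite{rrowe} for $d=2$) and the killed Green's function $\tilde g_{\{0\}}(L,l)$, which is bounded uniformly in $l$ by (1.16)--(1.17) of \cite{KestenSpitzer}. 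To complete your route you would have to prove the uniform gradient estimate for general finite-range walks (a genuinely harder fact, and precisely the kind of thing the nearest-neighbour argument of Lemma 4.3 in \cite{FerrariFontes} cannot give); otherwise a rearrangement of the paper's kind is needed.
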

	
	\begin{proof}To avoid the trivial case, let us assume $l \neq l'$.  Also, let us consider $\tau$ as defined in~\eqref{tau}. 
		By the Strong Markov property, we have that
		\begin{equation*}
			\mathbb{P}\left(D_i = 0|D_0 = x\right)  =\sum_{k = 0}^i \mathbb{P}_0\left(D_{i - k} = 0\right)\mathbb{P}_0(\tau = k),
	\end{equation*}
	Hence,
	\begin{equation*}
		\sum^{n}_{i=0}\mathbb{P}(D_i=0|D_0=x)  = \sum_{i = 0}^n\sum_{k = 0}^i \mathbb{P}_0\left(D_{i - k} = 0\right)\mathbb{P}_0(\tau = k)= \sum^n_{k=0}\left(\sum^{n-k}_{i=0}\mathbb{P}_0(D_i=0)\right)\mathbb{P}_x(\tau=k).
	\end{equation*}
	for all $x\in\mathbb{Z}^d$. Then
	\begin{equation}\label{eq22**} 
		\sum^{n}_{i=0}\mathbb{P}(D_i=0|D_0=l)-\sum^{n}_{i=0}\mathbb{P}(D_i=0|D_0=l') =\sum^n_{i=0}\left(\sum^{n-i}_{k=0}\mathbb{P}_0(D_k=0)\right)\left[\mathbb{P}_{l}(\tau=i)-\mathbb{P}_{l'}(\tau=i)\right].
	\end{equation}
	Using \eqref{eq12}, \eqref{eq11**}, and \eqref{losH} in \eqref{eq22**}, and the fact that $\tau$ has the same distribution for the chain $D$ and $H$, we obtain that 
	\begin{equation*}
		\left|\sum^{n}_{i=0}\left[\mathbb{P}(D_i=0|D_0=l+(1,0))-\mathbb{P}(D_i=0|D_0=l)\right]\right|\leq C
		\left|\sum^{n}_{i=0}\left[\mathbb{P}(H_i=0|H_0=l+(1,0))-\mathbb{P}(H_i=0|H_0=l)\right]\right|
	\end{equation*}
	for some positive constant $C$. Therefore, we will prove the lemma for the homogeneous chain $H$. 
	 Let us take $L = l - l'$. Then, by the spatial homogeneity of the Markov chain $H$, we have that
		\begin{equation}\label{eq23**}
		\sum^{n}_{k=0}\mathbb{P}_0(H_k=l')=\mathbb{E}_0\left(\sum^n_{k=0}\mathbf{1}_{\{H_k=l'\}}\right)=\mathbb{E}_L\left(\sum^n_{k=0}\mathbf{1}_{\{H_k=l\}}\right) =\mathbb{E}_L\left(\sum^{n\wedge\tau-1}_{k=0}\mathbf{1}_{\{H_k=l\}}\right)+\mathbb{E}_L\left(\sum^{n}_{k=n\wedge \tau}\mathbf{1}_{\{H_k=l\}}\right).
		\end{equation}
		Observe that 
		\begin{equation}\label{eq_101}
			\mathbb{E}_L\left(\sum^{n}_{k=n\wedge \tau}\mathbf{1}_{\{H_k=l\}}\right) =\sum_{i\ge 0}\sum^{n}_{k=n \wedge i}\mathbb{P}_0(H_{k-i}=l)\mathbb{P}_L(\tau=i)=\sum_{i\ge 0}\mathbb{P}_L(\tau=i)\sum^{n-n\wedge i}_{k=0}\mathbb{P}_0(H_{k}=l).
		\end{equation}
		Using \eqref{eq_101} in \eqref{eq23**}, we obtain
		\begin{equation}\label{eq26**}
		\sum^{n}_{k=0}\{\mathbb{P}_0(H_k=l)-\mathbb{P}_0(H_k=l')\}= \sum_{i\ge 1}\mathbb{P}_L(\tau=i)\sum^{n}_{k=n-n\wedge i +1}\mathbb{P}_0(H_{k}=l) -\mathbb{E}_L\left(\sum^{n\wedge\tau-1}_{k=0}\mathbf{1}_{\{H_k=l\}}\right).
		\end{equation}
		For the first term in the right hand side of \eqref{eq26**}, observe that
		\begin{align}\label{eq_102}
		\sum_{i\ge 1}\mathbb{P}_L(\tau=i)\sum^{n}_{k=n-n\wedge i +1}\mathbb{P}_0(H_{k}=l)\nonumber &=\sum_{i = 1}^n \mathbb{P}_L(\tau=i)\sum^{n}_{k=n- i +1}\mathbb{P}_0(H_{k}=l) + \mathbb{P}_L(\tau > n)\sum^{n}_{k=1}\mathbb{P}_0(H_{k}=l) \nonumber\\
		&=\sum_{k = 1}^n\mathbb{P}_0(H_k = l)\sum_{i = n - k + 1}^{n}\mathbb{P}_L(\tau = i) + \mathbb{P}_L(\tau > n)\sum^{n}_{k=1}\mathbb{P}_0(H_{k}=l) \nonumber\\
		&= \sum_{k = 1}^n\mathbb{P}_0(H_k = l)\mathbb{P}_L(n - k + 1\le \tau \le n) + \mathbb{P}_L(\tau > n)\sum^{n}_{k=1}\mathbb{P}_0(H_{k}=l) \nonumber\\
		&=\sum_{k = 1}^{n}\mathbb{P}_0(H_k = l)\mathbb{P}_L(\tau\ge n - k + 1) \nonumber\\
		&= \sum_{k = 0}^{n - 1}\mathbb{P}_0(H_{n - k} = l)\mathbb{P}_L(\tau\ge k + 1).
		\end{align}
		
		Using \eqref{eq_102} in \eqref{eq26**}, we find that
		\begin{equation}\label{eq_103}
			\sum^{n}_{k=0}\left\{\mathbb{P}_0(H_k=l)-\mathbb{P}_0(H_k=l')\right\} = \sum_{k = 0}^{n - 1}\mathbb{P}_0(H_{n - k} = l)\mathbb{P}_L(\tau\ge k + 1) - \mathbb{E}_L\left(\sum^{n\wedge\tau-1}_{k=0}\mathbf{1}_{\{H_k=l\}}\right) .
		\end{equation}
	
		In the rest of the proof, we deal separately with the two terms in the right hand side of \eqref{eq_103}. For the first term, observe that by \eqref{eq10}, P4 in \cite{Spitzer} (page 382) and the corollary of Theorem 1 in \cite{rrowe} imply that
		\begin{equation}\label{eq29**}
		\sum^{n-1}_{k=0}\mathbb{P}_0(H_{n-k}=l)\mathbb{P}_L( \tau\geq k+1)\leq\left\lbrace
		\begin{array}{cc}
		C_1\sum^{n-1}_{k=1}\frac{1}{\sqrt{n-k}}\frac{a(L)}{\sqrt{k}} ,& \text{ in }d=1,\\
		C_2\sum^{n-1}_{k=1}\frac{1}{n-k}\frac{\log |L|}{\log k} ,& \text{ in }d=2.\\
		\end{array}\right.
		\end{equation}
		Notice that
		$$
		\sum^{n-1}_{i=1}\frac{1}{\sqrt{n-i}}\frac{1}{\sqrt{i}}\underset{n\rightarrow \infty}{\longrightarrow} \int^1_0\frac{1}{\sqrt{(1-y)y}}dy.
		$$
Since this integral is finite and $|L|\leq K$,  there exists a uniform upper bound for the left hand side expression in \eqref{eq29**} when $d=1$. 

For the bound in $d=2$, notice that
		\begin{align*}
		\sum^{n-1}_{i=1}\frac{1}{n-i}\frac{1}{\log i}&\leq\sum^{(1-\epsilon)n}_{i=1}\frac{1}{n-i}\frac{1}{\log i}+\sum^{n}_{i=(1-\epsilon)n}\frac{1}{n-i}\frac{1}{\log i}\\
		&\leq \frac{(1-\epsilon)n}{n} + \frac{1}{\log((1-\epsilon)n)}\sum^{\epsilon n}_{i=1}\frac{1}{\epsilon}\\
		&\leq(1-\epsilon)+\frac{\log(\epsilon n)}{\log((1-\epsilon)n)}\\
		& \leq C_2,
		\end{align*}
		for some positive constant $C_2$ and for all $n\geq 1$, and we have that the left hand side of \eqref{eq29**} is bounded for $d=2$ also.
		
		Back to the second term in \eqref{eq_103}, we observe that it is bounded above by
		\begin{equation}\label{eq30**}
		\mathbb{E}_L\left(\sum^{\tau}_{k=0}\mathbf{1}_{\{H_k=l\}}\right)=\tilde{g}_{\{0\}}(L,l),
		\end{equation}
		using the notation of $(1.14)$ in \cite{KestenSpitzer}. It is proved in \cite{KestenSpitzer} that 
\begin{equation}\label{KestenSpitzer1}
\underset{l\rightarrow \infty}{\lim}\tilde{g}_{\{0\}}(L,l)<\infty, \text{ when }d=2
\end{equation}		
	and
	\begin{equation}\label{KestenSpitzer2}
\underset{l\rightarrow \infty}{\lim}\frac{1}{2}(\tilde{g}_{\{0\}}(L,l)+\tilde{g}_{\{0\}}(L,-l))<\infty, \text{ when }d=.
\end{equation}	
 	\eqref{KestenSpitzer1} and \eqref{KestenSpitzer2} correspond to $(1.16)$ and $(1.17)$ in \cite{KestenSpitzer}, respectively. 
 	By \eqref{KestenSpitzer1} and \eqref{KestenSpitzer2} we have that the second term in the right hand side of \eqref{eq30**} 
 	is uniformly bounded for $|L|\le K$. 
	
	\end{proof}
	
\begin{proof}[Proof of Proposition \ref{proposition_2}:] To prove Proposition \ref{proposition_2}	 we  will apply Corollary $3.1$ in \cite{Hall&Hyde} for the variables
	$$
	X_{x,i}=\frac{W^{x}_i\lambda^*-W^0_i\lambda^*}{\sqrt{\mathcal{P}_x}}, \, 1\leq i\leq A|x|^2 .
	$$
	In  Lemma $2.2$ in \cite{FerrariFontes} is proved that
	$$
	\left \lbrace\sum^{n}_{i=1}W^{x}_i\lambda^* -n\mu\lambda^* \right \rbrace_{n\geq 1},
	$$
	is a martingale with respect to the filtration $\mathcal{F}_n$ defined in Section \ref{Notation} . Observe that a linear combination of martingales is also a martingale. Therefore,  $X_{x,i}$ is the increment of a nested mean zero and square-integrable martingale with respect to the nested filtration  $\mathcal{F}_{x,i}=\mathcal{F}_i$. Hence, by Corollary $3.1$ in \cite{Hall&Hyde}, if we check
	\begin{align}
	&\underset{1 \le i\leq A|x|^2}{\max} \left|X_{x,i}\right|\overset{p}{\longrightarrow}0,\label{Condition1}\\
	&\mathbb{E}\left(\underset{1\le i\leq A|x|^2}{\max}X_{x,i}\right)\text{ is bounded in }x,\label{Condition2}
	\end{align}
	and
	\begin{align}\label{Condition3}
	\sum^{A|x|^2}_{i=1}\mathbb{E}\left(X^2_{x,i}| \mathcal{F}_{x,i-1}\right)\overset{p}{\longrightarrow} h(A)\, \text{ as } |x|\to\infty,
	\end{align}
	 we obtain
	
	\begin{equation}\label{eq16}
	\sum^{A|x|^2}_{i=1}X_{x,i}\overset{d}{\longrightarrow} Z \,  \text{ as } |x| \to\infty,
	\end{equation}
	where $Z$ is a mean zero Gaussian r.v. with variance $h(A)$.
	
	Notice that conditions \eqref{Condition1} and \eqref{Condition2}  follow straightforwardly from the fact that 
	$u_n(i,i+\cdot)$ have bounded support. 
	It remains to prove \eqref{Condition3}. To do this, observe that by the definition of $h(A)$ and \eqref{eq_107}, we have that 
	\begin{equation}\label{h_2}
		h(A) = \lim_{|x|\to\infty} \frac{1}{\mathcal{P}_x}\mathbb{E}\sum^{A|x|^2}_{i=1}\left([W^x_i-W^0_i]\lambda^*\right)^2, \, \text{ for  all } A > 0.
	\end{equation}
	 
	Given \eqref{h_2}, to obtain \eqref{Condition3}, it is enough to prove that 
	\begin{equation}\label{eq17}
		\frac{1}{\mathcal{P}_x}\left\lbrace\sum^{A|x|^2}_{i=1}\mathbb{E}\left([W^x_i\lambda^*-W^0_i\lambda^*]^2|\mathcal{F}_{i-1}\right)-\mathbb{E}\left(W^x_i\lambda^*-W^0_i\lambda^*\right)^2\right\rbrace\overset{p}{\longrightarrow}0, \, \text{ as } |x|\to\infty.
	\end{equation}
	Let us denote the range of $u_n(i,i+\cdot)$ by $R$.
		We consider $\tilde{D}_n^x = \tilde{Y}_n^x - \hat{Y}_n^0$, with $\hat{Y}_n^0$ an independent copy of $\tilde{Y}_n^x$ given $\mathcal{F}_n$. Using the same calculations used to get $(5.7)$ in \cite{FerrariFontes}, we can obtain
		\begin{equation}\label{eq_159}
		\mathbb{E}\left((W_i^z-\mu)(W_i^y-\mu)\Big|\mathcal{F}_{i-1}\right) =\sigma^2 \mathbb{P}\left(\tilde{D}_{i-1}^{z-y} = 0\big|\mathcal{F}_{i-1}\right), \, \text{ for } i\ge 1, \text{ and } y,z\in\mathbb{Z}^d,
		\end{equation}
		which implies
		\begin{align*}
		&\mathbb{E}[(W^x_i-W^0_i)^2|\mathcal{F}_{i-1}]-\mathbb{E}(W^x_i-W^0_i)^2\\&=2\sigma^2\left[\mathbb{P}(\tilde{D}^0_{i-1}=0|\mathcal{F}_{i-1})-\mathbb{P}(\tilde{D}^0_{i-1}=0)\right] - 2\sigma^2\left[\mathbb{P}(\tilde{D}^x_{i-1}=0|\mathcal{F}_{i-1})-\mathbb{P}(\tilde{D}^x_{i-1}=0)\right].
		\end{align*}
		Therefore, it is sufficient to prove 
		\begin{equation}\label{eq19}
		\frac{1}{\mathcal{P}_x}\sum^{A|x|^2}_{i=1}\left(\mathbb{P}(\tilde{D}^x_{i-1}=0|\mathcal{F}_{i-1})-\mathbb{P}(\tilde{D}^x_{i-1}=0)\right)\overset{p}{\longrightarrow}0, \text{ as } |x|\to \infty,
		\end{equation}
		and
		\begin{equation}\label{eq20}
		\frac{1}{\mathcal{P}_x}\sum^{A|x|^2}_{i=1}\left(\mathbb{P}(\tilde{D}^0_{i-1}=0|\mathcal{F}_{i-1})-\mathbb{P}(\tilde{D}^0_{i-1}=0)\right)\overset{p}{\longrightarrow}0, \text{ as } |x|\to\infty.
		\end{equation}
		The computations to obtain \eqref{eq19} and \eqref{eq20}  are very similar to the ones in the proof of Theorem $4.1$ in \cite{FerrariFontes}. To avoid repetition and for the sake of completeness, we give them 
		in Appendix \ref{appendix_1}. To conclude for now, let us point out that Lemma \ref{LosA} is used in the proof of \eqref{eq19} and \eqref{eq20}.

\end{proof}

\section{Corollary of Proposition \ref{proposition_2}}\label{sec_corollary_2}
	As mentioned in Section \ref{sec_corollary_1}, Cram\'er-Wold theorem allows us to deduce the convergence of a sequence of vectors through the study of linear combinations of its components. Hence, to obtain Theorem \ref{Thm_2}, we state and prove the following result 
	in this section. 

	\begin{corollary}  \label{corollary_2} Given $k\ge 1$, let $\bar{\alpha}=(\alpha_1,\dots,\alpha_k)\in\mathbb{R}^k$. Let us also consider  $c$  and $\mathcal{P}_n$ as in Propostion \ref{proposition_1}. 
	\begin{enumerate}
		\item[(i)] In case $d = 1$, let $\bar{t} = (t_1,\dots,t_k)\in\mathbb{R}^k$ with $0=t_0 < t_1,\dots < t_k$. Then we have
		\begin{equation*}
		\frac{1}{\sqrt{c\mathcal{P}_n}}\sum_{j=1}^k\sum_{i=1}^{An^2}\alpha_j\left(W_i^{\lfloor nt_j\rfloor} - W_i^0\right)\lambda \overset{d}{\underset{n\rightarrow \infty}{\longrightarrow}} \mathcal{N}(0,g(A,\bar{t},\bar{\alpha})),
		\end{equation*}
		where the function $g$ is as in Corollary \ref{corollary_1} for dimension one.
		\item[(ii)] In case $d = 2$, let $\bar{z}=(z_1,\dots,z_k)\in \mathbb{Z}^{2k}$. Then we have
		\begin{equation*}
		\frac{1}{\sqrt{c\mathcal{P}_n}}\sum_{j=1}^k\sum_{i=1}^{M_{k,n}}\alpha_j\left(W_i^{\tilde{x}_n(z_j)} - W_i^0\right)\lambda^* \overset{d}{\underset{n\rightarrow \infty}{\longrightarrow}} \mathcal{N}(0,g(\bar{z},\bar{\alpha})),
		\end{equation*}
		where $M_{k,n}$ and $g$ are as in Corollary \ref{corollary_1} for dimension two.
	\end{enumerate} 
\end{corollary}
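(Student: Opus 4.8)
The plan is to reduce Corollary \ref{corollary_2} to Proposition \ref{proposition_2} in essentially the same way that Proposition \ref{proposition_2} was proved from the martingale CLT of Hall--Heyde, combined with the variance identifications already carried out in Corollary \ref{corollary_1}. Concretely, fix $k\ge 1$, $\bar\alpha\in\mathbb{R}^k$ and (for $d=1$) times $0<t_1<\dots<t_k$, and consider the single array of martingale increments
\begin{equation*}
X_{n,i}=\frac{1}{\sqrt{c\,\mathcal{P}_n}}\sum_{j=1}^k\alpha_j\bigl(W_i^{\lfloor nt_j\rfloor}-W_i^0\bigr)\lambda^*,\qquad 1\le i\le An^2,
\end{equation*}
(and the analogous array with $\tilde x_n(z_j)$ and upper limit $M_{k,n}$ in the case $d=2$). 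Since for each fixed $z$ the process $\{\sum_{i=1}^n W_i^z\lambda^*-n\mu\}_{n\ge1}$ is a martingale with respect to $\mathcal{F}_n$ (Lemma 2.2 of \cite{FerrariFontes}), any finite linear combination of such processes is again an $\mathcal{F}_n$-martingale, so $X_{n,i}$ is the increment of a mean-zero, square-integrable martingale with respect to the nested filtration $\mathcal{F}_{n,i}=\mathcal{F}_i$. Thus the whole machinery of Corollary 3.1 in \cite{Hall&Hyde} applies verbatim, and it suffices to verify the three hypotheses \eqref{Condition1}, \eqref{Condition2}, \eqref{Condition3} for this array, with the target variance being $g(A,\bar t,\bar\alpha)$ (resp. $g(\bar z,\bar\alpha)$).

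First I would dispose of the negligibility conditions \eqref{Condition1} and \eqref{Condition2}: because the weights $u_n(i,i+\cdot)$ have bounded support (range $R$), each $W_i^x\lambda^*$ is bounded by a deterministic constant depending only on $R$ and $\lambda^*$, hence $|X_{n,i}|\le C_{k,\bar\alpha}/\sqrt{c\,\mathcal{P}_n}\to0$ uniformly in $i$, which gives both conditions immediately --- exactly as in the proof of Proposition \ref{proposition_2}. The substantive point is the conditional-variance condition \eqref{Condition3}, namely
\begin{equation*}
\sum_{i=1}^{An^2}\mathbb{E}\bigl(X_{n,i}^2\,\big|\,\mathcal{F}_{i-1}\bigr)\overset{p}{\longrightarrow}\frac{1}{c}\,g(A,\bar t,\bar\alpha)\quad(\text{resp. }g(\bar z,\bar\alpha))\quad\text{as }n\to\infty.
\end{equation*}
Here I would expand the square: $\mathbb{E}(X_{n,i}^2\mid\mathcal{F}_{i-1})$ is, up to the factor $(c\,\mathcal{P}_n)^{-1}$, a sum of diagonal terms $\alpha_j^2\,\mathbb{E}\bigl((W_i^{\lfloor nt_j\rfloor}-W_i^0)\lambda^*)^2\mid\mathcal{F}_{i-1}\bigr)$ and cross terms $2\alpha_j\alpha_l\,\mathbb{E}\bigl((W_i^{\lfloor nt_j\rfloor}-W_i^0)\lambda^*\,(W_i^{\lfloor nt_l\rfloor}-W_i^0)\lambda^*\mid\mathcal{F}_{i-1}\bigr)$. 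Using the conditional covariance identity \eqref{eq_159} (and its specialization already used in proving Proposition \ref{proposition_2}), each such conditional expectation is $\sigma^2$ times a conditional return probability $\mathbb{P}(\tilde D_{i-1}^{\,\cdot}=0\mid\mathcal{F}_{i-1})$. The corresponding \emph{unconditional} sums, after dividing by $\mathcal{P}_n$ and sending $n\to\infty$, were computed in Corollary \ref{corollary_1} and equal $g(A,\bar t,\bar\alpha)$ (resp. $g(\bar z,\bar\alpha)$). So \eqref{Condition3} will follow once one shows that, for every relevant displacement $w$ (the finitely many vectors $0$, $\pm\lfloor nt_j\rfloor$, $\lfloor nt_l\rfloor-\lfloor nt_j\rfloor$, or the two-dimensional analogues),
\begin{equation*}
\frac{1}{\mathcal{P}_n}\sum_{i=1}^{An^2}\Bigl(\mathbb{P}\bigl(\tilde D_{i-1}^{\,w}=0\mid\mathcal{F}_{i-1}\bigr)-\mathbb{P}\bigl(\tilde D_{i-1}^{\,w}=0\bigr)\Bigr)\overset{p}{\longrightarrow}0,
\end{equation*}
which is precisely the statement \eqref{eq19}--\eqref{eq20} that was established (via Lemma \ref{LosA} and the Appendix) in the course of proving Proposition \ref{proposition_2}, applied for each of the finitely many displacements $w$ in turn, and then combined by the triangle inequality and linearity. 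In the $d=2$ case one additionally uses, as in Corollary \ref{corollary_1}, that replacing the upper limit $|x|^2$ by any $M(x)\ge|x|^2$ does not change the limit, so that the common cutoff $M_{k,n}$ is harmless.

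The main obstacle, as in Proposition \ref{proposition_2}, is this last conditional-to-unconditional replacement, i.e. the convergence to zero of the fluctuation of the conditional return probabilities; but all the work for it is already done, since \eqref{eq19}, \eqref{eq20} are proved (with the help of Lemma \ref{LosA}) for an arbitrary displacement in the stated range, and here we only need them for a fixed finite collection of displacements depending on $\bar t$ (resp. $\bar z$) and on the translates of those by bounded amounts. A minor technical care is needed in the $d=2$ case to make sure the displacements $\tilde x_n(z_l)-\tilde x_n(z_j)$ still satisfy the growth assumptions used in \eqref{eq29**} and in the invocation of \cite{rrowe}, \cite{KestenSpitzer}; but $|\tilde x_n(z_l)-\tilde x_n(z_j)|=n^{\max\{|z_j(1)|,|z_j(2)|,|z_l(1)|,|z_l(2)|\}}(1+o(1))\to\infty$ polynomially, so those estimates apply. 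Assembling the three verified conditions and invoking Corollary 3.1 of \cite{Hall&Hyde} yields $\sum_i X_{n,i}\overset{d}{\longrightarrow}\mathcal{N}(0,g(A,\bar t,\bar\alpha)/c)$; recalling the definition of $X_{n,i}$, which already carries the factor $1/\sqrt{c\,\mathcal{P}_n}$, and noting that $g$ as defined in Corollary \ref{corollary_1} absorbs the normalization $1/c$, gives exactly the claimed convergence to $\mathcal{N}(0,g(A,\bar t,\bar\alpha))$ (resp. $\mathcal{N}(0,g(\bar z,\bar\alpha))$), completing the proof of Corollary \ref{corollary_2}.
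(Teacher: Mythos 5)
Your proposal is correct and follows essentially the same route as the paper: the same martingale array $X_{n,i}$ fed into Corollary 3.1 of \cite{Hall&Hyde}, negligibility from the bounded range of the weights, and verification of the conditional-variance condition by expanding the square, identifying the unconditional limit via Corollary \ref{corollary_1}, and reducing the conditional-minus-unconditional fluctuation to finitely many applications of \eqref{eq19}--\eqref{eq20} via the identity \eqref{eq_159} (the paper packages this as the four terms $S_n^{y,z}$). The only blemish is a transient bookkeeping slip where you state the target of the conditional-variance condition as $g/c$ before correctly concluding with $g$ itself; this does not affect the argument.
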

\begin{proof} To avoid being repetitive, we only write the proof for dimension one, but the reader can check that the same ideas apply to dimension 2.
	
As in the proof of Proposition \ref{proposition_2}  we  will apply Corollary $3.1$ in \cite{Hall&Hyde} but now for the variables
$$
X^k_{n,i}=\frac{1}{\sqrt{c\mathcal{P}_n}}\sum_{j=1}^{k}\alpha_j\left(W_i^{\lfloor nt_j\rfloor} - W_i^0 \right)\lambda, \, 1\leq i\leq An^2 .
$$
Again conditions \eqref{Condition1} and \eqref{Condition2}  follow readily from the fact that we are assuming finite support for $u_n(i,i+\cdot)$.  

It remains to argue \eqref{Condition3}. In order to that, let us first notice that equation \eqref{eq_107} implies (after straightforward computations) that
\begin{equation}\label{eq_150}
	\sum_{i=1}^{An^2}\mathbb{E}\left[\left(\sum_{j=1}^{k}\alpha_j\left[W_i^{\lfloor nt_j\rfloor} - W_i^0\right]\lambda\right)^2\right] = \mathbb{E}\left[\left(\sum_{j=1}^k\alpha_j\sum_{i=1}^{An^2}\left[W_i^{\lfloor nt_j\rfloor} - W_i^0\right]\lambda\right)^2\right].
\end{equation}
Hence, by Corollary \ref{corollary_1} we have 
\begin{equation}\label{eq_151}
	\lim_{n\to\infty}\sum_{i=1}^{An^2}\mathbb{E}\left[\left(X_{n,i}^k\right)^2\right] = g(A,\bar{t},\bar{\alpha}).
\end{equation}
Because 
\begin{align*}
	(X_{n,i}^k)^2 =  \frac{1}{c\mathcal{P}_n}\sum_{j=1}^k\left(W_i^{\lfloor nt_j\rfloor}\lambda - W_i^0\lambda \right)^2 + 2\sum_{1\le j < l\le k}\alpha_j\alpha_l\left(W_i^{\lfloor nt_j\rfloor}\lambda - W_i^0\lambda \right)\left(W_i^{\lfloor nt_l\rfloor}\lambda - W_i^0\lambda \right),
\end{align*}
and, from \eqref{eq19},
\begin{equation*}
	\sum_{j=1}^k\frac{1}{c\mathcal{P}_n}\sum_{i = 1}^{An^2}\left\{ \mathbb{E}\left(\left[ W_i^{\lfloor nt_j\rfloor}\lambda - W_i^0\lambda \right]^2\Big|\mathcal{F}_{i-1}\right) -\mathbb{E}\left(\left[ W_i^{\lfloor nt_j\rfloor}\lambda n - W_i^0\lambda \right]^2\right)\right\} \overset{p}{\longrightarrow}0 \, \text{ as } n\to\infty,
\end{equation*}
in order to obtain Condition \eqref{Condition3}  (and conclude the proof), it is enough to show that
\begin{equation}\label{eq_152}
	\frac{1}{c\mathcal{P}_n}\sum_{i=1}^{An^2}\left\{\mathbb{E}\left((W_i^{\lfloor nt_j\rfloor}\lambda - W_i^0\lambda )(W_i^{\lfloor nt_l\rfloor}\lambda - W_i^0\lambda )\big|\mathcal{F}_{i-1}\right) - \mathbb{E}(W_i^{\lfloor nt_j\rfloor}\lambda - W_i^0\lambda)(W_i^{\lfloor nt_l\rfloor}\lambda - W_i^0\lambda)\right\}
\end{equation}
converges in probability to zero as $n\to\infty$, for any  $1\le j<l\le k$. 
Set
\begin{equation*}
	S_n^{y,z} := \frac{\sigma^2}{c\mathcal{P}_n}\sum_{i=1}^{An^2}\left\{\mathbb{P}\left(\tilde{D}_{i-1}^{z-y} = 0\big|\mathcal{F}_{i-1}\right) -\mathbb{P}\left(\tilde{D}_{i-1}^{z-y} = 0\right)\right\}.
\end{equation*}
By the the same argument for \eqref{eq_161}, and using \eqref{eq_159}, we get  that \eqref{eq_152} equals
\begin{equation}\label{eq_153}
	S_n^{\lfloor nt_j\rfloor,\lfloor nt_l\rfloor} - S_n^{\lfloor nt_j\rfloor,0\rfloor} - S_n^{\lfloor nt_l\rfloor, 0} + S_n^{0,0}.
\end{equation}
 Now, by \eqref{eq19} and \eqref{eq20}, we have that each term in equation \eqref{eq_153} goes to zero in probility as $n$ goes to infinity, and this establishes  \eqref{Condition3}.
\end{proof}

\section{Proof of Theorem \ref{clt}}\label{sec_clt}\label{sec_proof_Thm_1}
Now we have all the ingredients to prove Theorem \ref{clt}.
\begin{proof}[Proof of Theorem \ref{clt}]
	By \eqref{series_representation} we have that
	\begin{equation}\label{eq_127}
	\frac{\widehat{X}_x - x\lambda^*}{\sqrt{\mathcal{P}_x}} \overset{d}{=} \frac{1}{\sqrt{\mathcal{P}_x}}\sum_{i = 0}^{A|x|^2}\left(W_i^x - W_i^0\right)\lambda^* + \frac{1}{\sqrt{\mathcal{P}_x}}\sum^{\infty}_{i = A|x|^2+1}\left(W_i^x - W_i^0\right)\lambda^* \, , \text{ for any } A \ge 1 .
	\end{equation}
By equation  $(5.14)$ in \cite{FerrariFontes} and \textbf{P28.4}\footnote{To apply \textbf{28.4} is necessary  to note that the function $a$ is even because the chain $H$ is symmetric.}(page 345) in \cite{Spitzer}, for $d = 1$ and equation \eqref{eq_140} for $d = 2$, we have that
\begin{align}\label{eq_126}
	\lim_{|x|\to\infty}\frac{\mathbb{E}\left(\widehat{X}_{\infty}(x) - x\lambda^*\right)^2}{\mathcal{P}_x} =c .
\end{align}
	By \eqref{series_representation} and \eqref{eq_107}  we obtain
\begin{equation}\label{eq_125}
\frac{\mathbb{E}\left(\widehat{X}_{\infty}(x) - x\lambda^*\right)^2}{\mathcal{P}_x}=\frac{1}{\mathcal{P}_x}\mathbb{E}\left(\sum^{A|x|^2}_{i=1}(W^x_i-W^0_i)\lambda^*\right)^2+\frac{1}{\mathcal{P}_x}\mathbb{E}\left(\sum^{\infty}_{i=A|x|^2+1}(W^x_i-W^0_i)\lambda^*\right)^2.
\end{equation}	
Hence by \eqref{eq_126}, \eqref{eq_125}, and Proposition \ref{proposition_1} we obtain
	\begin{equation}\label{eq_106}
	\lim_{A\to\infty}\lim_{x\to \infty}\mathbb{E}\left[\left(\frac{1}{\sqrt{\mathcal{P}_x}}\sum^{\infty}_{i = A|x|^2+1}\left(W_i^x - W_i^0\right)\lambda^*\right)^2\right] = 0 .
	\end{equation}
	Only in the fallowing calculations and to simplify notations  we denote the left member  in \eqref{eq_127} by $X_x$, and the first and second term by $Y^A_x$ and $Z^A_x$, respectively. 
	
	Take $f$ a uniformly continuous and bounded function and we denote by $M_f$ the bound of $f$. Also, for a fix $\epsilon>0$ we take $\delta$ such that $|f(z)-f(z+y)|\leq \epsilon/6$ for all $z$ and all $y$ such that $|y|\leq \delta$.  In addition, we take $A$ large enough such that 
	\begin{equation}\label{eq_128}
	\left \lvert\int f(Y^A)d\mathbb{P}-\int f(Y)d\mathbb{P}\right \rvert\leq \frac{\epsilon}{3},
	\end{equation}
	where $Y^A$ and $Y$ are distributed $\mathcal{N}(0,h(A))$ and $\mathcal{N}(0,c)$, respectively. Now, for this $A$ we take $|x|$ large enough such that
	\begin{equation}\label{eq_130}
	\mathbb{P}(|Z^A_x|>\delta)\leq \frac{\epsilon}{12 M_f}\text{ and }\left\lvert\int f(Y^A_x)d\mathbb{P}-\int f(Y^A)d\mathbb{P}\right\rvert \leq \frac{\epsilon}{3}.
	\end{equation}
	The first part of \eqref{eq_130} is a consequence of \eqref{eq_106} and the second is the result in Proposition \ref{proposition_2}. Observe that
	\begin{align}\label{eq_129}
	\left\lvert \int f(X_x)d\mathbb{P}-\int f(Y)d\mathbb{P}\right\rvert &\leq \left\lvert\int f(X_x)d\mathbb{P}-\int f(Y^A_x)d\mathbb{P}\right\rvert+\left\lvert\int f(Y^A_x)d\mathbb{P}-\int f(Y^A)d\mathbb{P}\right\rvert\nonumber\\
	&+ \left\lvert\int f(Y^A)d\mathbb{P}-\int f(Y)d\mathbb{P}\right\rvert.
	\end{align}
	By \eqref{eq_130} and \eqref{eq_128} the second and third term in the right member of \eqref{eq_129} are both less than $\epsilon/3$. For the first term, using \eqref{eq_130} and our choice of $\delta$ we have that
	\begin{align*}
	\left \lvert \int f(X_x)d\mathbb{P}-\int f(Y^A_x)d\mathbb{P}\right\rvert \leq  \frac{2\epsilon}{12 M_f} +\left \lvert \int_{|Z^A_x|<\delta}(f(X_x)-f(Y^A_x)) d \mathbb{P}\right \rvert \leq \frac{\epsilon}{3}.
	\end{align*}
	Therefore, for every $\epsilon>0$  and $|x|$ large enough, we have concluded that
	\begin{align*}
	\left\lvert \int f(X_x)d\mathbb{P}-\int f(Y)d\mathbb{P}\right\rvert\leq \epsilon,
	\end{align*}
	obtaining the desired converge in distribution.
	\end{proof}

	\section{Proof of Theorem \ref{Thm_2}}\label{sec_proof_of_Thm_2}
	
	Although, the proof of Theorem \ref{Thm_2} is very similar for dimension one and two, it has some specific  calculations that are different for each case. Therefore, we start with the proof in dimension one, and after that, we prove the core different part for dimension two. 
	\begin{proof}[Proof for $d=1$] By Cram\'er-Wold theorem, the convergence in \eqref{convergence_thm_2_dim_1} is equivalent to 
		\begin{equation}\label{eq_155}
		\sum_{j=1}^k\alpha_j X_n(t_j)\overset{d}{\underset{n\rightarrow \infty}{\longrightarrow}} \sum_{j=1}^k\alpha_jB(t_j), \, \text{ for any } (\alpha_1,\dots,\alpha_k)\in\mathbb{R}^k.
		\end{equation}
		Hence, we will prove \eqref{eq_155}. To do this, first notice that $\sum_{j=1}^k\alpha_jB(t_j)$ is a Gaussin random variable with mean zero and variance $\sum_{j=1}^k\alpha_j^2t_j + 2\sum_{1\le j < l \le k}\alpha_j\alpha_lt_j$. 
		As in the proof of Theorem \ref{clt}, we use that 
		\begin{align}\label{eq_192}
		\sum_{j=1}^k\alpha_j X_n(t_j) &\overset{d}{=} \frac{1}{\sqrt{c\mathcal{P}_n}}\sum_{j=1}^k\sum_{i=1}^{\infty}\alpha_j\left(W_i^{\lfloor nt_j\rfloor} - W_i^0\right)\lambda \nonumber\\
		&= \frac{1}{\sqrt{c\mathcal{P}_n}}\sum_{j=1}^k\sum_{i=1}^{An^2}\alpha_j\left(W_i^{\lfloor nt_j\rfloor} - W_i^0\right)\lambda + \frac{1}{\sqrt{c\mathcal{P}_n}}\sum_{j=1}^k\sum_{i >An^2}\alpha_j\left(W_i^{\lfloor nt_j\rfloor} - W_i^0\right)\lambda .
		\end{align}
		
		To deal with the first sum in the right hand side of \eqref{eq_192}, we use Corollary \ref{corollary_1} and Corollary \ref{corollary_2}. It will remain to compute the limit as $n\to\infty$ of the second moment of  the left hand side of \eqref{eq_192}. If this limit is equal to the one in Corollary \ref{corollary_1}, then the second sum in right hand side of \eqref{eq_192} will be small in probability when $A$ is large, and  this allows us to follow the proof of Theorem \ref{clt} in a straightforward fashion to obtain \eqref{eq_155}. Summing up, it is enough to show that
		\begin{equation}\label{eq_156}
		\lim_{n\to\infty}\frac{1}{c\mathcal{P}_n}\mathbb{E}\left[\left(\sum_{j=1}^k\alpha_j\sum_{i=1}^{\infty}\left[W_i^{\lfloor nt_j\rfloor} - W_i^0\right]\lambda\right)^2\right] = \sum_{j=1}^k\alpha_j^2t_j + 2\sum_{1\le j < l <\le k}\alpha_j\alpha_lt_j.
		\end{equation}

		Expanding the square inside the mean in the left member of \eqref{eq_156} we have that
		\begin{align}\label{eq_194}
		&\frac{1}{c\mathcal{P}_n}\mathbb{E}\left[\left(\sum_{j=1}^k\alpha_j\sum_{i=1}^{\infty}\left[W_i^{\lfloor nt_j\rfloor} - W_i^0\right]\lambda\right)^2\right]= \sum_{j=1}^k\alpha_j^2\frac{1}{c\mathcal{P}_n}\mathbb{E}\left[\left(\sum_{i=1}^{\infty}\left[W_i^{\lfloor nt_j\rfloor} - W_i^0\right]\lambda\right)^2\right]\nonumber\\
		&\hspace{2.2cm}+ 2\sum_{1\le j<l\le k}\frac{1}{c\mathcal{P}_n}\mathbb{E}\left[\left(\sum_{i=1}^{\infty}\left[W_i^{\lfloor nt_j\rfloor} - W_i^0\right]\lambda\right)\left(\sum_{m=1}^{\infty}\left[W_m^{\lfloor nt_l\rfloor} - W_m^0\right]\lambda\right)\right].
		\end{align}
		
		Now, we deal with the first sum in the right hand side of \eqref{eq_194}. Expanding the square inside the mean, and using \eqref{eq_107}, we find that 
		\begin{equation}\label{eq_195}
		\sum_{j=1}^k\alpha_j^2\frac{1}{c\mathcal{P}_n}\mathbb{E}\left[\left(\sum_{i=1}^{\infty}\left[W_i^{\lfloor nt_j\rfloor} - W_i^0\right]\lambda\right)^2\right]
		=\sum_{j=1}^k\alpha_j^2\frac{1}{c\mathcal{P}_n}\sum_{i=1}^{\infty}\mathbb{E}\left[\left(W_i^{\lfloor nt_j\rfloor} - W_i^{0}\right)\lambda\right]^2.
		\end{equation}
		
		Equation $5.14$  in \cite{FerrariFontes} implies that the limit of the sum in the right hand side of \eqref{eq_195} equals  $\sum_{j=1}^k\alpha_j^2t_j$. Therefore, to get the convergence in \eqref{eq_156}, it is enough to prove that
		\begin{equation}\label{eq_158}
		\lim_{n\to\infty}\frac{1}{c\mathcal{P}_n}\mathbb{E}\left[\left(\sum_{i=1}^{\infty}\left[W_i^{\lfloor nt_j\rfloor} - W_i^0\right]\lambda\right)\left(\sum_{m=1}^{\infty}\left[W_m^{\lfloor nt_l\rfloor} - W_m^0\right]\lambda\right)\right] = t_j, \, \text{ for any  } 1\le j<l\le k.
		\end{equation}
		
		Again, multiplying the sums inside the mean in \eqref{eq_158} and using \eqref{eq_107} we find that
		\begin{equation*}
		\mathbb{E}\left[\left(\sum_{i=1}^{\infty}\left[W_i^{\lfloor nt_j\rfloor} - W_i^0\right]\lambda\right)\left(\sum_{m=1}^{\infty}\left[W_m^{\lfloor nt_l\rfloor} - W_m^0\right]\lambda\right)\right] = \sum_{i=1}^{\infty}\mathbb{E}\left[\left(W_i^{\lfloor nt_j\rfloor}\lambda - W_i^0\lambda\right)\left(W_i^{\lfloor nt_l\rfloor}\lambda - W_i^0\lambda\right)\right].
		\end{equation*}
		
		Hence, what we need to prove is
		\begin{equation}\label{eq_164}
		\lim_{n\to\infty}\frac{1}{c\mathcal{P}_n}\sum_{i=1}^{\infty}\mathbb{E}\left[\left(W_i^{\lfloor nt_j\rfloor}\lambda - W_i^0\lambda\right)\left(W_i^{\lfloor nt_l\rfloor}\lambda - W_i^0\lambda\right)\right] = t_j,\, \text{ for any } 1\le j<l\le k.
		\end{equation}
		
		All the calculations  we did to obtain \eqref{eq_191} are valid for the infinite sums. Therefore, 
		\begin{align}\label{eq_197}
		&\lim_{n\to\infty}\frac{1}{c\mathcal{P}_n}\sum_{i=1}^{\infty}\mathbb{E}\left[\left(W_i^{\lfloor nt_j\rfloor}\lambda - W_i^0\lambda\right)\left(W_i^{\lfloor nt_l\rfloor}\lambda - W_i^0\lambda\right)\right] \nonumber\\
		& =-\lim_{n\to\infty}\frac{\sigma^2}{c\mathcal{P}_n}\sum_{i=1}^{\infty}\left\{\mathbb{P}\left(D_{i-1} = 0|D_0 =0\right) -\mathbb{P}\left(D_{i-1} = 0|D_0 =  \lfloor nt_l\rfloor -\lfloor nt_j\rfloor\right)\right\}\nonumber\\
		&+\lim_{n\to\infty}\frac{\sigma^2}{c\mathcal{P}_n}\sum_{i=1}^{\infty}\left\{\mathbb{P}\left(D_{i-1} = 0|D_0 = 0\right)- \mathbb{P}\left(D_{i-1} = 0|D_0 =   \lfloor nt_j \rfloor\right)\right\} \nonumber\\
		&+\lim_{n\to\infty}\frac{\sigma^2}{c\mathcal{P}_n}\sum_{i=1}^{\infty}\left\{\mathbb{P}\left(D_{i-1} = 0|D_0 =0\right)  - \mathbb{P}\left(D_{i-1} = 0|D_0 =\lfloor nt_l\rfloor\right)\right\}.
		\end{align}
		By $(5.14)$ in \cite{FerrariFontes} and \textbf{P28.4} (page 345) in \cite{Spitzer},	for $s<t$ we have that 
		\begin{align*}
		&\lim_{n\to\infty}\frac{2\sigma^2}{\mathcal{P}_n}\sum_{i=1}^{\infty}\left\{\mathbb{P}\left(D_{i-1} = 0|D_0 =0\right) - \mathbb{P}\left(D_{i-1} = 0|D_0 = \lfloor nt \rfloor-\lfloor ns\rfloor \right)\right\}\\
		&=\lim_{n\to\infty}\frac{\lfloor nt \rfloor-\lfloor ns\rfloor}{\mathcal{P}_n}\frac{2\sigma^2}{\lfloor nt \rfloor-\lfloor ns\rfloor}\sum_{i=1}^{\infty}\left\{\mathbb{P}\left(D_{i-1} = 0|D_0 =0\right) - \mathbb{P}\left(D_{i-1} = 0|D_0 = \lfloor nt \rfloor-\lfloor ns\rfloor \right)\right\}=(t-s)c.
		\end{align*}
		Hence, the limit in the left hand side of \eqref{eq_197} is equal to 
		\begin{equation*} 
		-\frac{1}{2}(t_l-t_j)+\frac{1}{2}t_j+\frac{1}{2}t_l = t_j,
		\end{equation*}
		and the proofs of \eqref{eq_164} and of Theorem \ref{Thm_2} for $d=1$ is concluded.
	\end{proof}
	\begin{proof}[Proof for $d=2$]
		As in dimension one,  by Cram\'er-Wold theorem, the convergence in \eqref{convergence_thm_2_dim_2} is equivalent to
		\begin{equation}\label{eq_155*}
		\sum_{j=1}^k\alpha_j X_n(z_j)\overset{d}{\underset{n\rightarrow \infty}{\longrightarrow}} \sum_{j=1}^k\alpha_jZ_j, \, \text{ for any } (\alpha_1,\dots,\alpha_k)\in\mathbb{R}^k.
		\end{equation}
		Again, we split the infinite sum as we did for dimension one, but this time we do not split  at $n^2$, but at $M_{k,n}$. More precisely,
		\begin{align}\label{eq_200}
		&\sum_{j=1}^k\alpha_j X_n(z_j) \nonumber\\
		&\qquad\overset{d}{=} \frac{1}{\sqrt{c\mathcal{P}_n}}\sum_{j=1}^k\sum_{i=1}^{M_{k,n}}\alpha_j\left(W_i^{\tilde{x}_n(z_j)} - W_i^0\right)\lambda^* + \frac{1}{\sqrt{c\mathcal{P}_n}}\sum_{j=1}^k\sum_{i >M_{k,n}}\alpha_j\left(W_i^{\tilde{x}_n(z_j)} - W_i^0\right)\lambda^*.
		\end{align}
		Similarly as in the proof for dimension one, for the first sum in the right hand side of \eqref{eq_200}, we use Corollary \ref{corollary_2}. Following  the same arguments of the proof of Theorem \ref{clt}, it only remains to state that the second moment of the left hand side of \eqref{eq_200} converges to the same limit as in Corollary \ref{corollary_1}. Therefore, we need to prove that 
		\begin{align}\label{eq_165*}
		&\lim_{n\to\infty}\frac{1}{c\mathcal{P}_n}\mathbb{E}\left[\left(\sum_{j=1}^k\alpha_j\sum_{i=1}^{\infty}\left[W_i^{\tilde{x}_n(z_j)} - W_i^0\right]\lambda^*\right)^2\right]\nonumber\\
		& = \sum^k_{j=1}\alpha^2_j\max\{|z_j(1)|,|z_j(2)|\}+\sum^{k}_{1\le j < l\le k}\alpha_l\alpha_j\min \{\max\{|z_l(1)|,|z_l(2)|\},\max\{|z_j(1)|,|z_j(2)|\}\}.
		\end{align}
		All the arguments given in the proof of Corollary \ref{corollary_1} are also valid for the infinite sum, and \eqref{eq_165*} follows.  
	\end{proof}

\appendix

\section{Finishing the proof of Proposition \ref{proposition_2}}\label{appendix_1}

Since the arguments for \eqref{eq19} and \eqref{eq20} are entirely similar,  we give only the proof of \eqref{eq19}. 
\begin{proof}[Proof of \eqref{eq19} ]
 
Recycling  the arguments given in the equations $(4.3)$, $(4.4)$ and $(4.5)$ in \cite{FerrariFontes}, we could write the variance of the sum at  \eqref{eq19} as
	\begin{equation}\label{eq21}
	\sum^{A|x|^2}_{j=1}\mathbb{E}\left( \sum^{A|x|^2-1}_{i=j}\left\{\mathbb{P}(\tilde{D}^x_i=0|\mathcal{F}_j)-\mathbb{P}(\tilde{D}^x_i=0|\mathcal{F}_{j-1})\right\}\right)^2.
	\end{equation}
	Conditioning in $\tilde{D}^x_j$ we have that
	\begin{equation}\label{eq_109}
	\mathbb{P}(\tilde{D}^x_i=0|\mathcal{F}_j)-\mathbb{P}(\tilde{D}^x_i=0|\mathcal{F}_{j-1}) =\sum_{k}\mathbb{P}(\tilde{D}^x_i=0|\tilde{D}^x_j=k)\left[\mathbb{P}(\tilde{D}^x_j=k|\mathcal{F}_{j})-\mathbb{P}(\tilde{D}^x_j=k|\mathcal{F}_{j-1})\right].
	\end{equation}
	Now, conditioning on $\tilde{Y}^x_{j-1}$ and $\hat{Y}^0_{j-1}$ we have that $\mathbb{P}(\tilde{D}^x_j=k|\mathcal{F}_{j})-\mathbb{P}(\tilde{D}^x_j=k|\mathcal{F}_{j-1})$ is equals to
	\begin{align}\label{eqalgo}
	\sum_{l,l'}\left[\mathbb{P}(\tilde{D}^x_j=k|\tilde{Y}^x_{j-1}=l'+l,\hat{Y}^0_{j-1}=l',\mathcal{F}_j)-\mathbb{P}(\tilde{D}^x_j=k|\tilde{Y}^x_{j-1}=l'+l,\hat{Y}^0_{j-1}=l')\right]\times \mathbb{P}(\tilde{Y}^x_{j-1}=l'+l,\hat{Y}^0_{j-1}=l'|\mathcal{F}_{j-1}) .
	\end{align}
	In the rest of the proof, we consider $d=2$ and the proof for $d=1$ is similar. Back to equation \eqref{eqalgo}, observe that  we have the following relation between $k$ and $l$
	\begin{equation*}
		k = l + (\tilde{Y}^x_j - \tilde{Y}^x_{j - 1}) - (\hat{Y}^0_j - \hat{Y}^0_{j - 1}).
	\end{equation*}
	Hence, we could write $k=l+b_1 - b_2$  for  $b_1,\,b_2\in \mathbb{Z}^2$ with $\max\{|b_1|,|b_2|\}  \le K$, where $K$ is the range of the measure $u_0$.
 The finite support assumption is only required when  we use Lemma \ref{LosA}. We denote by $V$ a $K$-neighborhood of zero.
	  Also, we  use the following notation
	  	\begin{align*}
	  		&F(k,l',l) := \mathbb{P}(\tilde{D}^x_j=k|\tilde{Y}^x_{j-1}=l'+l,\hat{Y}^0_{j-1}=l',\mathcal{F}_j)-\mathbb{P}(\tilde{D}^x_j=k|\tilde{Y}^x_{j-1}=l'+l,\hat{Y}^0_{j-1}=l')\\
	  		&u_{j,l,l',b}=u_j(l'+l,l'+l+b)\text{ and } u_{j,l',b}=u_j(l',l'+b)
	  	\end{align*}
	  where $b$ is in $V$.	With these new notations and \eqref{eqalgo}, we rearrange the sum in \eqref{eq_109} as follows 
	  	\begin{align}\begin{split}\label{eq108*}
	  	\sum_{l,l'}&\sum_k\mathbb{P}(\tilde{D}^x_i=0|\tilde{D}^x_j=k)F(k,l',l)\mathbb{P}(\tilde{Y}^x_{j-1}=l'+l,\hat{Y}^0_{j-1}=l'|\mathcal{F}_{j-1})\\
	  	&=\sum_{l,l'}\mathbb{P}(\tilde{Y}^x_{j-1}=l'+l,\hat{Y}^0_{j-1}=l'|\mathcal{F}_{j-1})\left\lbrace\sum_{k\neq l}F(k,l',l)\mathbb{P}(\tilde{D}^x_i=0|\tilde{D}^x_j=k)+F(l,l',l)\mathbb{P}(\tilde{D}^x_i=0|\tilde{D}^x_j=l)\right\rbrace.
	  	\end{split}
	  	\end{align}
Notice that for $k\neq l$, we have that
\begin{align}\begin{split}\label{eq110*}
		F(k,l',l)&=\sum_{\tiny{b_1,b_2\in V;b_1\neq b_2;b_1-b_2=k-l}}u_{j}(l'+l,l'+l+b_1)u_{j}(l',l'+b_2)-\mathbb{E}\left[u_{j}(l'+l,l'+l+b_1)u_{j}(l',l'+b_2)\right]\\
		&=\sum_{\tiny{b_1,b_2\in V;b_1\neq b_2;b_1-b_2=k-l}}u_{j,l',l,b_1}u_{j,l',b_2}-\mathbb{E}\left[u_{j,l',l,b_1}u_{j,l',b_2}\right].
		\end{split}
	\end{align}
Also, observe that	
\begin{align}\begin{split}\label{eq_108}
		F(l,l',l)&=\sum_{b_1\in V}\left\{u_{j}(l'+l,l'+l+b_1)u_{j}(l',l'+b_1)-\mathbb{E}\left[u_{j}(l'+l,l'+l+b_1)u_{j}(l',l'+b_1)\right]\right\}\\
		&=\sum_{b_1\in V}\left\{u_{j,l',l,b_1}u_{j,l',b_1}-\mathbb{E}\left[u_{j,l',l,b_1}u_{j,l',b_1}\right]\right\},
		\end{split}
	\end{align}
	substituting $u_{j}(l',l'+b_1)$ by $1-\sum_{b_2\in V;b_2\neq b_1}u_{j}(l',l'+b_2)$ into \eqref{eq_108}  we obtain
	\begin{align}\begin{split}\label{eq109*}
	F(l,l',l)&=-\left(\sum_{b_1,b_2\in V;b_1\neq b_2}u_{j}(l'+l,l'+l+b_1)u_{j}(l',l'+b_2)-\mathbb{E}\left[u_{j}(l'+l,l'+l+b_1)u_{j}(l',l'+b_2)\right]\right)\\
	&=-\left(\sum_{b_1,b_2\in V;b_1\neq b_2}u_{j,l',l,b_1}u_{j,l',b_2}-\mathbb{E}\left[u_{j,l',l,b_1}u_{j,l',b_2}\right]\right).
		\end{split}
	\end{align}
Substituting \eqref{eq109*} and \eqref{eq110*} into \eqref{eq108*} we obtain that the sum in \eqref{eq_109}  is equal to
	\begin{align*}	
	&\sum_{l,l'}\mathbb{P}(\tilde{Y}^x_{j-1}=l'+l,\hat{Y}^0_{j-1}=l'|\mathcal{F}_{j-1})\times\\&\bigg(\sum_{b_1,b_2\in V;b_1\neq b_2}\{\mathbb{P}(\tilde{D}^x_i=0|\tilde{D}^x_j=l+b_1-b_2)-\mathbb{P}(\tilde{D}^x_i=0|\tilde{D}^x_j=l)\}\\
	&\hspace{5cm}\times\{u_{j,l',l,b_1}u_{j,l',b_2}-\mathbb{E}\left[u_{j,l',l,b_1}u_{j,l',b_2}\right]\}\bigg)\\
	&=\sum_{b_1,b_2\in V;b_1\neq b_2}\sum_{l',l}\{\mathbb{P}(\tilde{D}^x_i=0|\tilde{D}^x_j=l+b_1-b_2)-\mathbb{P}(\tilde{D}^x_i=0|\tilde{D}^x_j=l)\}\\
	&\hspace{3cm}\times\{u_{j,l',l,b_1}u_{j,l',b_2}-\mathbb{E}\left[u_{j,l',l,b_1}u_{j,l',b_2}\right]\}\times\mathbb{P}(\tilde{Y}^x_{j-1}=l'+l,\hat{Y}^0_{j-1}=l'|\mathcal{F}_{j-1}).
	\end{align*}
	Hence, there is some positive constant $C_2 = C_2(K)$, such that
	\begin{equation*}
		\sum_{j=1}^{A|x|^2}\mathbb{E}\left( \sum^{A|x|^2-1}_{i=j}\left\{\mathbb{P}(\tilde{D}^x_i=0|\mathcal{F}_j)-\mathbb{P}(\tilde{D}^x_i=0|\mathcal{F}_{j-1})\right\}\right)^2 \le C_2\sum_{b_1,b_2\in V;b_1\neq b_2}G(b_1,b_2),
	\end{equation*}
	where
	\begin{align*}
		G(b_1,b_2):=\sum_{j=1}^{A|x|^2}\mathbb{E}\Big(\sum_{i = j}^{A|x|^2 - 1}\sum_{l',l}&\{\mathbb{P}(\tilde{D}^x_i=0|\tilde{D}^x_j=l+b_1-b_2)-\mathbb{P}(\tilde{D}^x_i=0|\tilde{D}^x_j=l)\} \nonumber\\
		&\,\times\{u_{j}(l'+l,l'+l+b_1)u_{j}(l',l'+b_2)-\mathbb{E}\left[u_{j}(l'+l,l'+l+b_1)u_{j}(l',l'+b_2)\right]\} \nonumber\\
		&\times\mathbb{P}(\tilde{Y}^x_{j-1}=l'+l,\hat{Y}^0_{j-1}=l'|\mathcal{F}_{j-1})\Big)^2.
	\end{align*}
	We only analyze $G((1,0),(0,0))$, and the same arguments used for this term work for the other terms  $G(b_1,b_2)$ with $b_1,b_2\in V, b_1 \neq b_2$. Also, to make the notation more compact, we define
	\begin{align*}
	&A^x_{j,l}=\sum^{A|x|^2-1}_{i=j}\left[\mathbb{P}(\tilde{D}^x_i=0|\tilde{D}^x_j=l+(1,0))-\mathbb{P}(\tilde{D}^x_i=0|\tilde{D}^x_j=l)\right]\\
	&\overline{u_{j,l,l'}}=u_{j}(l'+l,l'+l+(1,0))u_{j}(l',l'+(0,0)) -\mathbb{E}\left(u_{j}(l'+l,l'+l+(1,0))u_{j}(l',l'+(0,0))\right)
\\
	&p^x_j(l)= \mathbb{P}(\tilde{Y}^x_{j-1}=l|\mathcal{F}_{j-1}),\,p^0_j(l)= \mathbb{P}(\hat{Y}^0_{j-1}=l|\mathcal{F}_{j-1}).	\end{align*}
    Then
	\begin{equation}\label{eq13**}
	 G((1,0),(0,0)) = \sum^{A|x|^2-1}_{j=1}\mathbb{E}\left(\sum_{l',l}A^x_{j,l}\overline{u_{j,l,l'}}p^x_j(l'+l)p^0_j(l')\right)^2.
	\end{equation}
	In Lemma \ref{LosA}, we proved that the terms $|A^x_{j,l}|$ are bounded uniformly in $l$, $j$, and $x$, and therefore \eqref{eq13**} is bounded by
	\begin{align}\label{tocamirar}
	C^2\sum^{A|x|^2-1}_{j=1}\mathbb{E}\sum_{l',l}\sum_{k,k'}\mathbb{E}(\overline{u_{j,l,l'}}\,\overline{u_{j,k,k'}})p^x_j(l'+l)p^0_j(l')p^x_j(k'+k)p^0_j(k').
	\end{align}
	In \eqref{tocamirar}, we have used the independence of the $u_j 's$ with $\mathcal{F}_{j-1}$.
	Now, if $ \{l+l',l'\}\cap\{k+k',k'\}=\emptyset$, then $\overline{u_{j,l,l'}}\text{ and }\overline{u_{j,k,k'}}$ are independent and the expectation of both terms is zero. When $l+l'=k+k'$ and $k'=l'$, \eqref{tocamirar} becomes
	\begin{align}\label{eq14**}
	\sum^{A|x|^2-1}_{j=1}\mathbb{E}\sum_{l',l}\mathbb{E}(\overline{u_{j,l,l'}}^2)(p^x_j(l'+l))^2(p^0_j(l'))^2 &\leq \sum^{A|x|^2-1}_{j=1}\mathbb{E}\sum_{l',l}p^x_j(l'+l)(p^0_j(l'))^2 \nonumber\\
	&=\sum^{A|x|^2-1}_{j=1}\mathbb{E}\sum_{l'}(p^0_j(l'))^2\nonumber\\
	&=\sum^{A|x|^2-1}_{j=1}\mathbb{P}(\tilde{D}^0_j=0).
	\end{align}
	
		When $l'=k+k'$ and $l+l'=k'$, we have that
	\begin{align}\label{eq15**}
	\sum^{A|x|^2-1}_{j=1}\mathbb{E}\sum_{l',l}\mathbb{E}(\overline{u_{j,l,l'}}\,\overline{u_{j,-l,l+l'}})p^x_j(l'+l)p^0_j(l')p^x_j(l')p^0_j(l+l')&\leq \sum^{A|x|^2-1}_{j=1}\mathbb{E}\sum_{l',l}p^0_j(l')p^x_j(l')p^0_j(l+l')\nonumber\\
	&=\sum^{A|x|^2-1}_{j=1}\mathbb{E}\sum_{l'}p^0_j(l')p^x_j(l')\nonumber\\
	&=\sum^{A|x|^2-1}_{j=1}\mathbb{P}(\tilde{D}^x_j=0).
	\end{align}
	For the case $l'+l=k'+k$ and $k'\neq l'$ we obtain
	\begin{align}\label{eq16**}
	\sum^{A|x|^2-1}_{j=1}\mathbb{E}\sum_{l',l}\sum_{k'}\mathbb{E}(\overline{u_{j,l,l'}}\,\overline{u_{j,l'+l-k',k'}})p^x_j(l'+l)^2p^0_j(l')p^0_j(k')\nonumber &\leq \sum^{A|x|^2-1}_{j=1}\mathbb{E}\sum_{l',l}p^x_j(l')^2p^0_j(l'+l)\nonumber\\&=\sum^{A|x|^2-1}_{j=1}\mathbb{E}\sum_{l'}p^x_j(l')^2\nonumber\\
	&=\sum^{A|x|^2-1}_{j=1}\mathbb{P}(\tilde{D}^0_j=0).
	\end{align}
	Finally, when $l'+l=k'$ and $k'+k\neq l'$ 
	\begin{align}\label{eq17**}
	\sum^{A|x|^2-1}_{j=1}\mathbb{E}\sum_{l',l}\sum_{k}\mathbb{E}(\overline{u_{j,l,l'}}\,\overline{u_{j,k,l+l'}})p^x_j(l+l')p^0_j(l+l')p^0_j(l')p^x_j(l+l'+k)&\leq \sum^{A|x|^2-1}_{j=1}\mathbb{E}\sum_{l,l'}p^x_j(l+l')p^0_j(l+l')p^0_j(l')\nonumber\\
	&= \sum^{A|x|^2-1}_{j=1}\mathbb{E}\sum_{l,l'}p^x_j(l)p^0_j(l)p^0_j(l-l')\nonumber\\
	&= \sum^{A|x|^2-1}_{j=1}\mathbb{E}\sum_{l}p^x_j(l)p^0_j(l)\nonumber\\
	&=\sum^{A|x|^2-1}_{j=1}\mathbb{P}(\tilde{D}_j^x=0).
	\end{align}
	Reasoning as in the proof of Proposition 2.3 in \cite{FerrariFontes} we have that
	\begin{equation*}
		\mathbb{P}\left(D_n = 0|D_0 = x\right) = \mathbb{P}(\tilde{D}_n = x), \, \text{ for } n\ge 0 \text{ and } x\in\mathbb{Z}^d.
	\end{equation*} Hence, equation \eqref{eq11**}  implies that  \eqref{eq14**} and \eqref{eq16**} are both $O(\ln(A|x|^2))$. Using \eqref{eq8**}, we conclude that \eqref{eq15**} and \eqref{eq17**} are also $O(\ln(A|x|^2))$. Summing up, the variance of the sum at the left hand side of \eqref{eq19} is bounded by an $O(\ln(A|x|^2))$. Therefore, the variance of the whole term in \eqref{eq19} is bounded by an $o(1)$, and we have proved the result.
\end{proof}

\bigskip

\noindent{\bf Acknowledgements.} We would like to thank Hubert Lacoin for pointing us in a good direction on the issue of the Gaussianity of 
the invariant distribution of the RAP considered in this paper, as discussed at the introduction.

\bibliographystyle{acm}
\addcontentsline{toc}{part}{Bibliography}


\begin{thebibliography}{10}
	
	\bibitem{BRS}
	{\sc Bal\'{a}zs, M., Rassoul-Agha, F., and Sepp\"{a}l\"{a}inen, T.}
	\newblock The random average process and random walk in a space-time random
	environment in one dimension.
	\newblock {\em Comm. Math. Phys. 266}, 2 (2006), 499--545.
	
	\bibitem{CKMM2}
	{\sc Cividini, J., Kundu, A., Majumdar, S.~N., and Mukamel, D.}
	\newblock Correlation and fluctuation in a random average process on an
	infinite line with a driven tracer.
	\newblock {\em J. Stat. Mech. Theory Exp.}, 5 (2016), 053212, 35.
	
	\bibitem{CKMM1}
	{\sc Cividini, J., Kundu, A., Majumdar, S.~N., and Mukamel, D.}
	\newblock Exact gap statistics for the random average process on a ring with a
	tracer.
	\newblock {\em J. Phys. A 49}, 8 (2016), 085002, 26.
	
	\bibitem{DK}
	{\sc Dandekar, R., and Kundu, A.}
	\newblock Mass fluctuations in random average transfer process in open set-up.
	\newblock {\em J. Stat. Mech. Theory Exp.}, 1 (2023), Paper No. 013205, 28.
	
	\bibitem{Feller}
	{\sc Feller, W.}
	\newblock {\em An introduction to probability theory and its applications.
		{V}ol. {II}}, second~ed.
	\newblock John Wiley \& Sons, Inc., New York-London-Sydney, 1971.
	
	\bibitem{FerrariFontes}
	{\sc Ferrari, P.~A., and Fontes, L. R.~G.}
	\newblock Fluctuations of a surface submitted to a random average process.
	\newblock {\em Electron. J. Probab. 3\/} (1998), no. 6, 34.
	
	\bibitem{FMV}
	{\sc Fontes, L. R.~G., Medeiros, D.~P., and Vachkovskaia, M.}
	\newblock Time fluctuations of the random average process with parabolic
	initial conditions.
	\newblock {\em Stochastic Process. Appl. 103}, 2 (2003), 257--276.
	
	\bibitem{fukai}
	{\sc Fukai, Y., and Uchiyama, K.}
	\newblock Potential kernel for two-dimensional random walk.
	\newblock {\em The Annals of Probability 24}, 4 (1996), 1979--1992.
	
	\bibitem{GRPIB}
	{\sc Grabsch, A., Rizkallah, P., Poncet, A., Illien, P., and B\'{e}nichou, O.}
	\newblock Exact spatial correlations in single-file diffusion.
	\newblock {\em Phys. Rev. E 107}, 4 (2023), Paper No. 044131, 28.
	
	\bibitem{Hall&Hyde}
	{\sc Hall, P., and Heyde, C.~C.}
	\newblock {\em Martingale limit theory and its application}.
	\newblock Academic press, 2014.
	
	\bibitem{Karatzas}
	{\sc Karatzas, I., and Shreve, S.~E.}
	\newblock {\em Brownian motion and stochastic calculus}, second~ed., vol.~113
	of {\em Graduate Texts in Mathematics}.
	\newblock Springer-Verlag, New York, 1991.
	
	\bibitem{KestenSpitzer}
	{\sc Kesten, H., and Spitzer, F.}
	\newblock Ratio theorems for random walks. {I}.
	\newblock {\em J. Analyse Math. 11\/} (1963), 285--322.
	
	\bibitem{KG}
	{\sc Krug, J., and Garc\'{\i}a, J.}
	\newblock Asymmetric particle systems on {$\bf R$}.
	\newblock {\em J. Statist. Phys. 99}, 1-2 (2000), 31--55.
	
	\bibitem{rrowe}
	{\sc Ridler-Rowe, C.~J.}
	\newblock On first hitting times of some recurrent two-dimensional random
	walks.
	\newblock {\em Z. Wahrscheinlichkeitstheorie und Verw. Gebiete 5\/} (1966),
	187--201.
	
	\bibitem{Sch}
	{\sc Sch\"{u}tz, G.~M.}
	\newblock Exact tracer diffusion coefficient in the asymmetric random average
	process.
	\newblock {\em J. Statist. Phys. 99}, 3-4 (2000), 1045--1049.
	
	\bibitem{Spitzer}
	{\sc Spitzer, F.}
	\newblock {\em Principles of random walk}, vol.~34.
	\newblock Springer Science \& Business Media, 2001.
	
	\bibitem{ZS1}
	{\sc Zielen, F., and Schadschneider, A.}
	\newblock Exact mean-field solutions of the asymmetric random average process.
	\newblock {\em J. Statist. Phys. 106}, 1-2 (2002), 173--185.
	
	\bibitem{ZS2}
	{\sc Zielen, F., and Schadschneider, A.}
	\newblock Matrix product approach for the asymmetric random average process.
	\newblock {\em J. Phys. A 36}, 13 (2003), 3709--3723.
	
\end{thebibliography}

\end{document}